\documentclass[]{amsart}
\usepackage{amsfonts}
\usepackage{indentfirst,amsthm,bm,amsmath}
\usepackage{hyperref}
\makeatletter
\date{}
\newtheorem{theorem}{Theorem}[section]
\newtheorem{corollary}{Corollary}[section]
\newtheorem{lemma}{Lemma}[section]

\newtheorem{remark}{Remark}[section]
\newtheorem{example}{Example}[section]
\newtheorem{question}{Question}[section]
\begin{document}

\title[]{Logarithmic difference lemma in several complex variables and partial difference equations}

\author[Tingbin Cao]{Tingbin Cao}
\address[Tingbin Cao]{Department of Mathematics, Nanchang University, Nanchang, Jiangxi 330031, P. R. China}
\email{tbcao@ncu.edu.cn}
\thanks{The work is supported by the National Natural Science Foundation of China (\#11871260, \#11461042) and the outstanding young talent assistance program of Jiangxi Province (\#20171BCB23002) in China.}

\author[Ling Xu]{Ling Xu}
\address[Ling Xu]{Department of Mathematics, Nanchang University, Nanchang, Jiangxi 330031, P. R. China}
\email{xuling-jxstnu@foxmail.com}

\subjclass[2010]{Primary 39A14; Secondary 32A22, 30D35}



\keywords{Partial difference equations; Nevanlinna theory; Logarithmic difference lemma; Meromorphic functions; Several complex variables}

\begin{abstract} In this paper, we mainly propose improvements of the logarithmic difference lemma for meromorphic functions in several complex variables, and then investigate meromorphic solutions of partial difference equations from the viewpoint of Nevanlinna theory.
\end{abstract}

\maketitle

\section{Introduction}

It is well known that the celebrated binomial function $C^{n}_{m}=\frac{n!}{(n-m)!m!}$ $(1\leq m\leq n)$ having the relation \begin{eqnarray}\label{E1.1}C^{n}_{m}=C^{n-1}_{m-1}+C^{n-1}_{m}\end{eqnarray}
in the early history of mathematics, which was known to Shijie Zhu in China in 1303. The functional relation \eqref{E1.1} is an example of partial difference equations which while was developed only after the 18th century. Discrete analogs of equations of mathematical physics have always been of great interest to scholars. For instances, R. Courant, K. Friedrichs and H. Lewy \cite{courant} discussed  algebraic problems of a very much simpler structure by replacing the differentials by difference quotients on some (say rectilinear) mesh. Y. Y. Azmy and V. Protopopescu \cite{azmy-protopopescu} investigated various aspects of the dynamics of a discrete reaction-diffusion system. D. Young \cite{young} introduced iterative methods to solve partial difference equations.  Although partial difference equations such as \eqref{E1.1} appear well before partial differential equations, it has not drawn as much attention as their continuous counterparts. Renewed interest has, however, been picking up momentum during the last sixty years among mathematician, physicists, engineers and computer scientists. For many examples of partial difference equations and its background, we refer to see \cite{Cheng-book, davis, duffin-rohrer}. If the continuous counterparts of \eqref{E1.1} are considered, then we find an interesting phenomenon that the entire function $f(z_{1}, z_{2})=e^{z_{1}+z_{2}}$ on $\mathbb{C}^{2}$ is a nontrivial solution of the partial difference equation $$f(z_{1}, z_{2})=f(z+c_{1}, z_{2}+c_{2})+f(z_{1}, z_{2}+c_{2}),$$ where $c_{1}, c_{2}$ are values in $\mathbb{C}^{2}$ such that $e^{c_{1}+c_{2}}+e^{c_{2}}=1.$  Motivated by this, it is worth in considering entire or meromorphic solutions of partial difference equations.\par

As early as over 30 years ago, several initial results on the existence of meromorphic solutions of some complex difference equations have been obtained by Bank, Kaufman, Shimomura, Yanagihara  and other researchers. Later on , the researches in this field were developed slowly, almost in a state of stagnation. Until recent ten years, Nananlinna theory (especially the difference analogues such as logarithmic derivative lemma, Tumura-Clunie theorem etc.)  has been used as a powerful tool to investigate complex difference equations, and thus it becomes an interesting and hot direction. For this background, we refer to see \cite{halburd-korhonen-1, chiang-feng, chen-book, zheng-korhonen-2018}.\par

As far as we know, however, there are very little of results on solutions of complex partial difference equations by using Nevanlinna theory. In 2012, Korhonen \cite{korhonen-1} firstly obtained the difference version of logarithmic derivative lemma (shortly, we may say logarithmic difference lemma) for meromorphic functions on $\mathbb{C}^{m}$ with hyperorder strictly less than $\frac{2}{3},$ and then used it to consider a class of partial difference equations in the same paper. In \cite{cao-korhonen-2016}, Cao and Korhonen improved the logarithmic difference lemma to the case where the hyperorder is strictly less than one. Meanwhile, Wang \cite{wangyue} considered some kinds of partial $q$-difference equations. \par

The main purpose of this paper is to improve the logarithmic difference lemma in Nevanlinna theory and use it to study complex partial difference equations, basically focus on linear partial difference equations, nonlinear partial difference equations, difference counterpart of Tumura-Clunie theorem concerning partial difference equations. We first introduce some basic notations and definitions as follows. Let $z=(z_{1}, \ldots, z_{m})\in\mathbb{C}^{m}$ with $\|z\|^{2}=\sum_{j=1}^{m}|z_{j}|^{2}.$ Define the differential operators $d=\partial+\overline{\partial}$ and $d^{c}=\frac{\partial-\overline{\partial}}{4\pi i }.$ For a meromorphic function $f$ on $\mathbb{C}^{m},$ let $\nu_{f-a}^{0}$ be the zero divisor of $f-a.$ Set $n(t, \frac{1}{f-a})=\int_{supp \nu_{f-a}^{0}\cap B_{m}(t)}\nu_{f-a}^{0}(z) (dd^{c}\|z\|^{2})^{m-1}$ if $m\geq 2;$ and $n(t, \frac{1}{f-a})=\sum_{|z|\leq t}\nu_{f-a}^{0}(z)$ if $m=1,$  where $B_{m}(t)=\{z: \|z\|\leq t\}.$  Denote by $N(r, \frac{1}{f-a})=\int_{1}^{r}\frac{n(r,\frac{1}{f-a})dt}{t}$ the counting functions of zeros of $f-a$ on complex vector space $\mathbb{C}^{m},$  by $m(r, f)$ the proximity function of $f$ defined as
$m(r,f)=\int_{\partial B_{m}(r)}\log^{+}\left|f(z)\right|\sigma_{m}(z)$ where $\sigma_{m}(z)=d^{c}\log\|z\|^{2}\wedge (dd^{c}\|z\|^{2})^{m-1}$ and $\log^{+} x=\max\{\log x, 0\}.$ Then the Nevanlinna characteristic function of $f$ is defined as $T(r,f)=N(r, f)+m(r,f).$ Then the first main theorem is said that $$T(r, \frac{1}{f-a})=T(r,f)+O(1)$$ for any value $a\in \mathbb{C}\cup\{\infty\}.$ A meromorphic function $f$ can be also seen as a holomorphic curve from $\mathbb{C}^{m}$ into $\mathbb{P}^{1}(\mathbb{C})$ with a reduced representation $f=(f_{0}, f_{1}),$ where $f_{0}$ and $f_{1}$ are entire function on $\mathbb{C}^{m}$ without common zeros. The Cartan characteristic function is defined by $T_{f}(r)=\int_{\partial B_{m}(r)}\log\max\{|f_{0}(z)|, |f_{1}(z)|\}\sigma_{m}(z)-\int_{\partial B_{m}(1)}\log\max\{|f_{0}(z)|, |f_{1}(z)|\}\sigma_{m}(z).$ The two characteristic functions have the relation $T_{f}(r)=T(r, f)+O(1).$ The defect $\delta_{f}(a)$ of zeros of $f-a$ is defined as $$\delta_{f}(a)=1-\limsup_{r\rightarrow\infty}\frac{N(r, \frac{1}{f-a})}{T(r,f)}.$$ The order $\rho(f)$ and hyperorder $\rho_{2}(f)$ of $f$ are defined respectively by
$$\rho(f)=\limsup_{r\rightarrow\infty}\frac{\log T(r,f)}{\log r},$$ and
$$\rho_{2}(f)=\limsup_{r\rightarrow\infty}\frac{\log\log T(r,f)}{\log r}.$$ We assume that the readers are familiar with the basic notations and results on Nevanlinna theory for meromorphic functions in several complex variables (refer to see, for examples \cite{griffiths, ru-1, noguchi-winkelmann}).\par

The logarithmic difference lemma of several complex variables in Nevanlinna theory will play the key role in studying meromorphic solutions of complex partial difference equations, as does as the logarithmic derivative lemma of several complex variables in investigating solutions of complex partial differential equations \cite{libaoqin-1, libaoqin-2, hu-yang-1}. Let $c\in\mathbb{C}^{m}\setminus\{0\}.$ Motivated by the ideas of \cite{cao-zheng-2018, zheng-korhonen-2018}, we continue to propose an improvement of the logarithmic difference lemma for meromorphic functions in several complex variables \cite{korhonen-1, cao-korhonen-2016} (Theorem \ref{T1}) that \begin{eqnarray}
m\left(r, \frac{f(z+c)}{f(z)}\right)=o\left(T(r, f)\right)
\end{eqnarray} holds for all $r$ possible outside of a set $E$ with zero upper density measure, provided that the growth of the meromorphic function $f$ on $\mathbb{C}^{m}$ satisfies \begin{equation}\label{E1.3}\limsup_{r\rightarrow\infty}\frac{\log T(r, f)}{r}=0\end{equation} (which implies that the hyperorder is rather than just strictly less than one). This is also an extension of \cite{halburd-korhonen-1, halburd-korhonen-tohge-1, zheng-korhonen-2018} from one variable to several variables. Then from it, we get the relation \begin{equation}T(r, f(z+c))=T(r, f)+o(T(r, f)), \,\,\,\, (r\not\in E),\end{equation}
under the assumption of \eqref{E1.3}.  We will also show the explicit expression of $o(T(r,f))$ in the logarithmic difference lemma for the special case whenever $f$ is of finite order as (Theorem \ref{T1'})
\begin{eqnarray}
m\left(r, \frac{f(z+c)}{f(z)}\right)=O\left(r^{\rho(f)-1+\varepsilon}\right),
\end{eqnarray} and  thus obtain the relation \begin{equation}T(r, f(z+c))=T(r, f)+O(r^{\rho(f)-1+\varepsilon})\end{equation} for any $\varepsilon(>0).$ This is an extension of Chiang and Feng \cite{chiang-feng} from one variable to several variables.\par

In terms of the above results on the logarithmic difference lemma, we can consider meromorphic solutions of partial difference equations. Since there are too many kinds of partial difference equations, we can not systematically and completely investigate solutions of partial difference equations. In this paper, we will focus on some typical models of partial difference equations. For the discrete potential Korteweg-de Vries (KdV) equations $$X^{i+1}_{j+1}=X_{j}^{i}+\frac{Z_{j}^{i}}{X_{j}^{i+1}-X_{j+1}^{i}}$$ in \cite{tremblay-grammaticos-ramani}, we can firstly consider the nonlinear partial difference equation
\begin{eqnarray}\label{E1.2}f(z_{1}+c_{1}, z_{2}+c_{2})=f(z_{1}, z_{2})+\frac{A(z_{1}, z_{2})}{f(z_{1}, z_{2}+c_{2})-f(z_{1}+c_{1}, z_{2})}\end{eqnarray}
where $c_{1}, c_{2}\in\mathbb{C}\setminus\{0\},$ and $A(z_{1}, z_{2})$ is a nonzero meromorphic function on $\mathbb{C}^{2}$ such that $T(r, A)=o(T(r, f))$ (or say, $A$ is a small function with respect to $f$). In fact, we will obtain  (Theorem \ref{T10'}) that \emph{any nontrivial meromorphic solution of the equation \eqref{E1.2} with the assumption \eqref{E1.3} must satisfy $\delta_{f}(0)>0.$} We also consider the Fermat type nonlinear partial difference equation
\begin{eqnarray*}\frac{1}{f^{m}(z_{1}+c_{1}, z_{2}+c_{2})}+\frac{1}{f^{m}(z_{1}, z_{2})}=A(z_{1}, z_{2})f^{n}(z_{1}, z_{2}),\end{eqnarray*} or
\begin{eqnarray*}\frac{1}{f^{m}(z_{1}+c_{1}, z_{2}+c_{2})}+\frac{1}{f^{m}(z_{1}+c_{1}, z_{2})}+\frac{1}{f^{m}(z_{1}, z_{2}+c_{2})}=A(z_{1}, z_{2})f^{n}(z_{1}, z_{2}),\end{eqnarray*}  and prove that any nontrivial meromorphic solution $f$ with $\delta_{f}(\infty)>0$ satisfies $\limsup_{r\rightarrow\infty}\frac{\log T(r, f)}{r}>0,$ provided that $A$ is a small function with respect to $f$  (see Theorem \ref{TFE}). Furthermore, we will prove the difference versions of the well-known  Tumura-Clunie theorem in several complex variables which is a powerful tool for studying complex (partial) differential equations (see for examples \cite{laine, hu-yang, hu-yang-1, libaoqin}). \par

There are many models of partial linear difference equations (see \cite{Cheng-book}), such as the two-level discrete heat equation
$$u_{j+1}^{i}=au^{i}_{j-1}+bu^{i}_{j}+cu^{i}_{j+1},$$
the nonsymmetric partial difference functional equation
$$\frac{u_{x+t, y}-2u_{x, y}+u_{x-t, y}}{t^{2}}=\frac{u_{x, y+s}-2u_{x, y}+u_{x,y-s}}{s^{2}},$$
and the steady state discrete Laplace equation
$$u_{m-1,n}+v_{m+1, n}+u_{m, n-1}+u_{m, n+1}-4u_{m,n}=0.$$
These equations impel us to study general linear homogeneous partial difference equations \begin{eqnarray}\label{E1.4}
A_{n}(z)f(z+c_{n})+\ldots+A_{1}(z)f(z+c_{1})+A_{0}(z)f(z)=0,
\end{eqnarray} where $A_{0}, \ldots, A_{n}$ are meromorphic functions on $\mathbb{C}^{m}$ and $c_{1}, \ldots, c_{n}\in\mathbb{C}^{m}\setminus\{0\}.$ According to the logarithmic difference lemma for finite order, we will obtain (Theorem \ref{T7}) that \emph{any nontrivial meromorphic solution $f$ of \eqref{E1.4} satisfies $\rho(f)\geq\rho(A_{k})+1,$ whenever one transcendental meromorphic coefficient $A_{k}$ $(k\in\{0, 1, \ldots, n\})$ dominates the growths of all the meromorphic coefficients.}  Motivated by the the model of the discrete or finite Poisson equation (see \cite{Cheng-book}) $$u_{i, j+1}+u_{i+1, j}+u_{i, j-1}+u_{i-1,j}-4u_{i,j}=g_{ij},$$ we also consider the linear nonhomogeneous partial difference equations \begin{eqnarray}\label{E18}
A_{n}(z)f(z+c_{n})+\ldots+A_{1}(z)f(z+c_{1})+A_{0}(z)f(z)=F(z),
\end{eqnarray}where meromorphic coefficients $A_{0}, \ldots, A_{n}, F(\not\equiv 0)$ on $\mathbb{C}^{m}$ are small functions with respect to meromorphic solutions $f.$ We will prove (Theorem \ref{T8}) that \emph{if a meromorphic solution $f$ of \eqref{E18} satisfies the assumption of $\limsup_{r\rightarrow\infty}\frac{\log T(r, f)}{r}=0,$ then we have $\delta_{f}(0)=0.$}
\par

This paper is organized as follows. Three forms of the logarithmic difference lemma for meromorphic functions in several complex variables are proved in Section \ref{s2}. By them, the relations of $N(r, f)\sim N(r, f(z+c))$ and $T(r, f)\sim T(r, f(z+c))$ are given in the same section. In Section \ref{s3}, we firstly consider nonlinear partial difference equations coming from the discrete potential KdV equation and the Fermat equation, and then study general partial linear difference equations. Difference analogues of Tumura-Clunie theorem concerning partial difference polynomials are also investigated in Section \ref{s3}.  Finally, we obtain an improvement of Korhonen's result for a class of complex partial difference equations by our logarithmic difference lemma. Some examples are given to show that the results of nonlinear partial difference equations or the linear partial difference equations are sharp.

\section{Logarithmic difference lemma in several complex variables}\label{s2}

In this section, to solve meromorphic solutions of partial difference equations, we mainly study the logarithmic difference lemma of several complex variables of Nevanlinna theory. In 2006, Halburd-Korhonen \cite[Theorem 2.1]{halburd-korhonen-1} and Chiang-Feng \cite{chiang-feng} obtained independently the difference version of logarithmic derivative lemma (shortly say, logarithmic difference lemma) for meromorphic functions with finite order on the complex plane. In 2014, Halburd, Korhonen and Tohge \cite[Theorem 5.1]{halburd-korhonen-tohge-1} extended it to the case for hyperorder strictly less than one. In the high dimensional case,  Korhonen \cite[Theorem 3.1]{korhonen-1} gave a logarithmic difference lemma for meromorphic functions in several variables of hyperorder strictly less that $2/3$. In 2016, Cao and Korhonen \cite{cao-korhonen-2016} improved it to the case for meromorphic functions with hyperorder $<1$ in several variables. Very recent, Zheng and Korhonen \cite{zheng-korhonen-2018} improve the condition to the case when the meromorphic funtion $f$ on the plane satisfies $\limsup_{r\rightarrow\infty}\frac{\log T(r, f)}{r}=0$ (rather than just hyperorder strictly less than one) which is usually called minimal type. In fact, they proved a version of the subharmonic functions for the logarithmic difference lemma. Here, we improve and extend the known results on logarithmic difference lemma directly for meromorphic fucntions of one and several complex variables by using a growth lemma for nondecreasing positive logarithmic convex function due to Zheng and Korhonen, but avoiding the subharmonic function theory. A tropical version of logarithmic derivative lemma due to Cao and Zheng \cite{cao-zheng-2018} was obtained very recently.\par

\begin{theorem}\label{T1}
Let $f$ be a nonconstant meromorphic function on $\mathbb{C}^{n},$ and let $c\in\mathbb{C}^{n}\setminus\{0\}.$ If \begin{eqnarray}\label{E1}\limsup_{r\rightarrow\infty}\frac{\log T(r, f)}{r}=0,\end{eqnarray} then
\begin{eqnarray*}
m\left(r, \frac{f(z+c)}{f(z)}\right)+m\left(r, \frac{f(z)}{f(z+c)}\right)=o\left(T(r, f)\right)
\end{eqnarray*} for all $r\not\in E,$ where $E$ is a set with zero upper density measure $E,$ i.e., $$\overline{dens}E=\limsup_{r\rightarrow\infty}\frac{1}{r}\int_{E\cap [1,r]}dt=0.$$
\end{theorem}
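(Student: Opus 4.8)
The plan is to obtain a pointwise estimate for $\log|f(z+c)/f(z)|$ on the sphere $\partial B_n(r)$, integrate it to bound the two proximity functions in terms of $T(R,f)$ for some $R>r$, and then remove the auxiliary radius $R$ by the Zheng--Korhonen growth lemma. Since $\log^{+}|g|+\log^{+}|1/g|=\bigl|\log|g|\bigr|$, it suffices to estimate $\int_{\partial B_n(r)}\bigl|\log|f(z+c)/f(z)|\bigr|\,\sigma_n(z)$, which controls $m(r,f(z+c)/f(z))$ and $m(r,f(z)/f(z+c))$ at once; this is why the symmetric form in the statement costs nothing extra.

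First I would fix $R>r+\|c\|$ and apply the Poisson--Jensen formula for the ball $B_n(R)$ to the meromorphic $f$, writing $\log|f(z)|$ as a Poisson-type boundary integral of $\log|f|$ over $\partial B_n(R)$ minus contributions of the zero and polar divisors of $f$ expressed through the Green kernel and the form $(dd^{c}\|z\|^{2})^{n-1}$. Evaluating at $z+c$ and at $z$ and subtracting, the boundary terms combine into an integral of $\log|f|$ against the \emph{difference} of Poisson kernels $P(z+c,\zeta)-P(z,\zeta)$, while the divisor terms combine into near-boundary counting contributions. Using the Lipschitz behaviour of the ball's Poisson kernel in its first argument, the kernel difference is $O\bigl(\|c\|R/(R-r)^{2}\bigr)$ uniformly for $\|z\|\le r$ and $\zeta\in\partial B_n(R)$, so after integration the boundary part is dominated by $C\,\tfrac{\|c\|R}{(R-r)^{2}}\bigl(m(R,f)+m(R,1/f)\bigr)$.

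Integrating the resulting pointwise bound over $z\in\partial B_n(r)$ against $\sigma_n$ and collecting the divisor terms (dominated by $N(R,f)+N(R,1/f)$ together with lower-order logarithmic terms), I expect an inequality of the shape
\begin{equation*}
m\!\left(r,\frac{f(z+c)}{f(z)}\right)+m\!\left(r,\frac{f(z)}{f(z+c)}\right)\le C\left(\frac{R}{(R-r)^{2}}\,T(R,f)+\log^{+}\frac{1}{R-r}+\log^{+}T(R,f)+1\right).
\end{equation*}
Now choose $R=r+r/\phi(r)$ with $\phi(r)\to\infty$ but $\phi(r)=o(\sqrt{r})$, so that $R-r\gg\sqrt{r}$ and hence $R/(R-r)^{2}=o(1)$, and invoke the growth lemma of Zheng and Korhonen for a positive nondecreasing logarithmically convex function: since $T(r,f)$ is such a function and satisfies $\limsup_{r\to\infty}\log T(r,f)/r=0$, one has $T(R,f)=(1+o(1))\,T(r,f)$ for all $r$ outside a set $E$ with $\overline{dens}\,E=0$. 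Substituting this into the displayed bound makes the right-hand side $o(T(r,f))$ for $r\notin E$, which is the assertion.

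The main obstacle is the several-variable Poisson--Jensen step together with the control of the divisor terms: unlike the planar case, the zeros and poles form codimension-one analytic sets, so one must handle their Green-kernel integrals and show, \emph{without} recourse to subharmonic function theory, that their near-boundary contribution after integration over $\partial B_n(r)$ is at most $O(T(R,f))$ up to logarithmic terms. A secondary point is calibrating the increment $r/\phi(r)$ so that simultaneously $R/(R-r)^{2}\to0$ and the growth lemma applies off a zero-upper-density set; this forces $\phi$ to grow slower than $\sqrt{r}$, and one must verify the hypotheses (monotonicity and logarithmic convexity of $T(r,f)$) under which the Zheng--Korhonen lemma produces exactly the exceptional set $E$ appearing in the statement.
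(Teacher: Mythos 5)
Your overall architecture (an integrated estimate at an auxiliary radius $R$, then removal of $R$ by the Zheng--Korhonen growth lemma) matches the paper's, but two steps have genuine gaps, one structural and one quantitative. The structural one is the step you yourself flag as the ``main obstacle'': in $\mathbb{C}^{n}$ with $n\geq 2$ there is no one-variable-style Poisson--Jensen formula for $\log|f|$ on the ball with a simple kernel-difference bound, since $\log|f|$ is not harmonic and the zero and polar sets must be handled through the Riesz measure and the Green function of the ball --- exactly the machinery the paper is at pains to avoid. The paper instead imports Korhonen's coordinate-direction estimate (Lemma \ref{L3}, obtained by applying the one-variable Poisson--Jensen formula on complex lines parallel to a coordinate axis and averaging), and this lemma only treats shifts $\tilde{c}_{j}$ along a single coordinate; consequently the paper must decompose $c=\tilde{c}_{1}+\cdots+\tilde{c}_{n}$, telescope $f(z+c)/f(z)$ through coordinate shifts, and then prove the auxiliary relation $T(r,f(z+c))=T(r,f)+o(T(r,f))$ (the assertion \eqref{E10}, i.e.\ Theorem \ref{T5}) to convert the error terms $o\bigl(T(r,f(z+\sum_{k}\tilde{c}_{k}))\bigr)$ into $o(T(r,f))$. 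Your sketch simply posits the hard estimate in a clean form for an arbitrary shift $c$; as written it is not a proof of the analytic core of the theorem.

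The quantitative gap is independent and fatal to your calibration. With the kernel bound $O\bigl(\|c\|R/(R-r)^{2}\bigr)$ multiplying $m(R,f)+m(R,1/f)$, you are forced to take $R-r\gg\sqrt{r}$, and you then claim $T(R,f)=(1+o(1))T(r,f)$ off a zero-upper-density set via Lemma \ref{L1}. But Lemma \ref{L1} only controls increments of size $\phi^{\delta}(r)$ with $\delta<\tfrac12$ and $\phi(r)=\max_{1\leq t\leq r}\{t/\log T(t)\}$; since $\phi(r)\leq r$, the permitted increment is always below $\sqrt{r}$. Worse, the asymptotic you need is actually false in the very regime the theorem targets: for a nondecreasing logarithmically convex $T$ with $T(r)=\exp\bigl(r/(\log r)^{2}\bigr)$, which satisfies \eqref{E1} with hyperorder one, any increment $a(r)=r/\phi(r)$ with $\phi(r)=o(\sqrt{r})$ gives $T(r+a(r))/T(r)=\exp\bigl((1+o(1))\,a(r)/(\log r)^{2}\bigr)\to\infty$ for \emph{all} large $r$, so no exceptional set of zero upper density (indeed no exceptional set at all) can rescue the substitution, even in the weaker form $T(R)\leq CT(r)$. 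The paper's choice evades this because in Lemma \ref{L3} the proximity functions enter at radius $r$ (bounded directly by $2T(r,f)+O(1)$) and only counting data are taken at radius $R$, with coefficient essentially $\tfrac{1}{R-(r+|c_{j}|)}$; this is compatible with the small increment $R-(r+|c_{j}|)=\bigl((r+|c_{j}|)/\log T(r+|c_{j}|,f)\bigr)^{\delta}$, $\delta\in(0,\tfrac12)$, which is exactly what Lemma \ref{L1} permits. As calibrated, your argument would prove the lemma only for slowly growing characteristics (e.g.\ finite order, where $T(r+\sqrt{r})\sim T(r)$ holds trivially), not under the minimal-type hypothesis \eqref{E1} that is the point of Theorem \ref{T1}.
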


\begin{remark}\label{R0}(i). We note that the condition \eqref{E1} implies that $\rho_{2}(f)\leq 1$ and the equality can possibly take happened. In fact, assume that \eqref{E1} holds, then there exists $r_{0}>0$ such that for any $r>r_{0},$ we have $\log T(r, f)<r$ and thus $\rho_{2}(f)\leq 1.$ Moreover, whenever $f$ is taken to satisfy, for example $T(r, f)=\exp\{\frac{r}{(\log r)^{m}}\}$ where $m\geq 1,$  one can easily get both \eqref{E1} and $\rho_{2}(f)=1.$ Hence, Theorem \ref{T1} is an improvement of  all the difference version of the logarithmic derivative lemma in several varaibles obtained before.\par

(ii). By the new version of the logarithmic difference lemma, all the second main theorem and Picard type theorem for meromorphic mappings from $\mathbb{C}^{m}$ into complex projective spaces $\mathbb{P}^{n}(\mathbb{C})$ obtained in \cite{korhonen-1, cao-1, cao-korhonen-2016} (including also \cite{halburd-korhonen-tohge-1, wond-law-wong-1, korhonen-li-tohge,  cao-nie}) can be improved under the assumption of \eqref{E1}.
\end{remark}

Before giving the proof, we show the following lemma proved recently by Zheng and Korhonen,  by which they obtained an improvement of difference version of logarithmic derivative lemma for meromorphic functions of one variable under the assumption \eqref{E1}.  This lemma is an improvement of a result on growth properties of nondecreasing continuous real functions (\cite[Lemma 2.1]{halburd-korhonen-1} and \cite{halburd-korhonen-tohge-1}).  Here the properties of real logarithmic convex functions are considered. Note that the characteristic function $T(r, f)$ and counting function $N(r,f)$ for a meromorphic function on $\mathbb{C}^{n}$ are satisfies the properties of nondecreasing positive, logarithmic convex, continuous function for $r.$ \par

\begin{lemma}\cite[Lemma 2.1]{zheng-korhonen-2018} \label{L1} Let $T(r)$ be a nondecreasing positive function in $[1, +\infty)$ and logarithmic convex with $T(r)\rightarrow+\infty (r\rightarrow+\infty).$  Assume that
\begin{equation}\label{E0}
\liminf_{r\rightarrow\infty} \frac{\log T(r)}{r}=0.
\end{equation} Set $$\phi(r)=\max_{1\leq t\leq r}\{\frac{t}{\log T(t)}\}.$$ Then given a constant $\delta\in(0, \frac{1}{2}),$ we have
\begin{equation*} T(r)\leq T(r+\phi^{\delta}(r))\leq \left(1+4\phi^{\delta-\frac{1}{2}}(r)\right)T(r), \,\, r\not\in E_{\delta},
\end{equation*}
where $E_{\delta}$ is a subset of $[1, +\infty)$ with the zero lower density. And $E_{\delta}$ has the zero upper density if \eqref{E0} holds for $\limsup.$
\end{lemma}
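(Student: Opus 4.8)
The plan is to prove the right-hand inequality, since $T(r)\le T(r+\phi^{\delta}(r))$ is immediate from the monotonicity of $T$ and $\phi^{\delta}(r)\ge 0$. First I would record the one consequence of the hypothesis used throughout. Writing $\varepsilon(t):=\log T(t)/t$, so that $t/\log T(t)=1/\varepsilon(t)$, assumption \eqref{E0} is exactly $\liminf_{t\to\infty}\varepsilon(t)=0$, and hence the running maximum $\phi(r)=\max_{1\le t\le r}\{t/\log T(t)\}$ tends to $+\infty$. Consequently $\phi^{\delta-\frac12}(r)\to 0$ (as $\delta<\frac12$), so the target factor $1+4\phi^{\delta-\frac12}(r)$ tends to $1$ while the step $\phi^{\delta}(r)$ tends to $+\infty$; this is the regime in which the whole argument operates.

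Next I would introduce the exceptional set
$$E_{\delta}=\left\{r\ge 1:\ T(r+\phi^{\delta}(r))>\left(1+4\phi^{\delta-\frac12}(r)\right)T(r)\right\}$$
and estimate its length on $[1,R]$ by a greedy covering. Let $r_{1}$ be the least point of $E_{\delta}\cap[1,R]$, and inductively let $r_{n+1}$ be the least point of $E_{\delta}$ with $r_{n+1}\ge r_{n}+\phi^{\delta}(r_{n})$. A short check shows that the intervals $I_{n}=[r_{n},\,r_{n}+\phi^{\delta}(r_{n})]$ cover $E_{\delta}\cap[r_{1},R]$, so $|E_{\delta}\cap[1,R]|\le r_{1}+\sum_{r_{n}\le R}\phi^{\delta}(r_{n})$. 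On each $I_{n}$ the defining inequality together with monotonicity gives $T(r_{n+1})\ge T(r_{n}+\phi^{\delta}(r_{n}))>(1+4\phi^{\delta-\frac12}(r_{n}))T(r_{n})$, whence
$$\log T(r_{n+1})-\log T(r_{n})\ge\log\!\left(1+4\phi^{\delta-\frac12}(r_{n})\right)\ge 2\,\phi^{\delta-\frac12}(r_{n})$$
for all sufficiently large $n$ (using $\phi\to\infty$ and $\log(1+4x)\ge 2x$ for small $x>0$). Telescoping over $r_{n}\le R$, and discarding the final interval whose length $\phi^{\delta}(R)=O(R^{\delta})$ is negligible, yields $\sum_{r_{n}\le R}\phi^{\delta-\frac12}(r_{n})\le\frac12\log T(R)$.

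To pass from this to the length, I would write $\phi^{\delta}(r_{n})=\phi^{\frac12}(r_{n})\,\phi^{\delta-\frac12}(r_{n})$ and use that $\phi$ is nondecreasing, so $\phi^{\frac12}(r_{n})\le\phi^{\frac12}(R)$ for $r_{n}\le R$. This gives the master estimate $|E_{\delta}\cap[1,R]|\le C\,\phi^{\frac12}(R)\log T(R)$ for an absolute constant $C$, and it remains to divide by $R$ and show the right side is $o(R)$ along the appropriate sequence. Here the two cases separate. Under the $\liminf$ form of \eqref{E0} one controls $\log T(R_{k})/R_{k}\to 0$ only along a sequence $R_{k}\to\infty$; since $\phi$ is a running maximum its maximizer is essentially $R_{k}$ at such points, giving $\phi^{\frac12}(R_{k})\log T(R_{k})=\sqrt{R_{k}\log T(R_{k})}=o(R_{k})$ and hence zero lower density. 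Under the $\limsup$ form one has $\log T(R)/R\to 0$ for every $R$ and obtains zero upper density the same way.

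I expect the genuine obstacle to be exactly this last step: controlling $\phi(R)$ when its maximizer $R_{0}=R_{0}(R)$ lies strictly inside $[1,R]$, so that $\phi(R)=R_{0}/\log T(R_{0})$ is governed by an earlier, smaller value of $\varepsilon$. Writing the quotient as $\phi^{\frac12}(R)\log T(R)/R=\varepsilon(R)/\sqrt{\varepsilon(R_{0})}$ with $\varepsilon(R_{0})=\min_{t\le R}\varepsilon(t)$, one must exclude a deep early dip of $\varepsilon$ followed by a rise. This is precisely where the logarithmic convexity of $T$ (convexity of $s\mapsto T(e^{s})$, equivalently $v''+(v')^{2}\ge 0$ for $v=\log T(e^{s})$) is indispensable: it constrains how $\log T$ may oscillate and forces the running minimum of $\varepsilon$ to be attained essentially at the endpoint in the relevant regime. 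This convexity input is what upgrades the earlier hyperorder-based arguments to the minimal-type hypothesis \eqref{E0}, and I would devote the bulk of the work to making it rigorous.
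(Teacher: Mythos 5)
You have correctly identified where the difficulty lives, but the mechanism you propose for resolving it does not work; note first that the paper itself gives no proof of Lemma \ref{L1} (it is imported verbatim from \cite{zheng-korhonen-2018}), so your attempt can only be measured against the cited source, whose strategy your covering-and-telescoping skeleton does match. That skeleton is sound: the greedy intervals, the telescoped bound $\sum_{r_n\le R}\phi^{\delta-\frac12}(r_n)\le\frac12\log T(R)+O(1)$, and the master estimate $|E_\delta\cap[1,R]|\le C\,\phi^{\frac12}(R)\log T(R)$ are all correct (modulo replacing ``least point of $E_\delta$'' by infima with slack). Your $\liminf$ case is also essentially complete once one point is made explicit: take the test radii $R_k$ to be \emph{record points} of $t/\log T(t)$, i.e.\ radii at which the running maximum $\phi$ is attained at the right endpoint; these exist unboundedly because $\phi(r)\to\infty$, and along them $\phi(R_k)=R_k/\log T(R_k)$, so the quotient collapses to $C\sqrt{\log T(R_k)/R_k}=C/\sqrt{\phi(R_k)}\to 0$, yielding zero lower density with no convexity input at all.

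The genuine gap is the $\limsup$ half. Writing $\varepsilon(t)=\log T(t)/t$ and $m(R)=\min_{1\le t\le R}\varepsilon(t)$, your master estimate gives $|E_\delta\cap[1,R]|/R\le C\,\varepsilon(R)/\sqrt{m(R)}$, and you claim logarithmic convexity ``forces the running minimum of $\varepsilon$ to be attained essentially at the endpoint.'' It does not. Convexity of $u(s)=T(e^{s})$ is perfectly compatible with a dip-and-rise profile: let $u$ be (nearly) affine on a long interval ending at $s_2$, so that $\varepsilon$ decays to a dip of size roughly $e^{-s_2}$, and then let $u(s)=u(s_2)e^{K(s-s_2)}$ on $[s_2,s_2+1]$ with $K\approx\eta\,e^{s_2+1}$; this is convex (slopes only increase), lifts $\varepsilon$ back up to the prescribed small level $\eta$, and repeating such episodes with $\eta_k\to 0$ keeps $\limsup_{r\to\infty}\varepsilon(r)=0$. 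At the post-dip radii one then has $\varepsilon(R)/\sqrt{m(R)}\approx\eta_k e^{s_2/2}\to\infty$, so your division step fails for general $R$. Worse, the failure is not merely an artifact of a lossy bound: in such a profile, for $r$ in the rise interval the running maximum $\phi(r)\approx e^{s_2}/\log u(s_2)$ is inherited from the dip, and the increment of $\log T$ over a step of length $\phi^{\delta}(r)$ is of order $\eta_k\,e^{\delta s_2}$, which dwarfs the tolerance $4\phi^{\delta-\frac12}(r)$; thus essentially the whole rise interval satisfies the bad inequality, and no sharper estimate at the division stage can rescue the pointwise-in-$R$ conclusion along your route. The upper-density half therefore needs a genuinely different accounting of where the bad intervals can accumulate relative to the records of $t/\log T(t)$ (this is exactly the substantive content of the cited proof, and the place where the precise formulation in \cite{zheng-korhonen-2018} matters), whereas your sketch, by your own admission, leaves precisely this step unproved -- and the convexity shortcut you bet on is false.
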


\begin{remark}Note that $\phi^{\delta}(r)\rightarrow\infty$ and $\phi^{\delta-\frac{1}{2}}(r)\rightarrow 0$ as $r\rightarrow \infty$ in Lemma \ref{L1}. Then for sufficiently large $r,$  we have $\phi^{\delta}(r)\geq h$ for any positive constant $h.$ Hence, $$T(r)\leq T(r+h)\leq T(r+\phi^{\delta}(r)) \leq (1+\varepsilon)T(r), \,\, r\not\in E,$$ where $E$ is a subset of $[1, +\infty)$ with the zero lower density.\end{remark} \par

The following lemma was obtained by Korhonen \cite{korhonen-1}. Since the assumption of $f(0)\neq 0, \infty$ for a meromorphic function $f$ of one variable in \cite[Lemma 5.1]{korhonen-1} can be omitted when the Poisson-Jensen formula is used, it does not matter with \cite[Lemma 5.2]{korhonen-1}. Thus we delete it in the statement.   \par

\begin{lemma}\cite[Lemma 5.2]{korhonen-1}\label{L3} Let $f$ be a nonconstant meromorphic function in $\mathbb{C}^{n},$ let $c=(c_{1}, \ldots, c_{n})\in\mathbb{C}^{n},$ let $\frac{1}{4}<\delta<1,$ and denote $\tilde{c}_{j}=(0,\ldots, 0, c_{j}, 0,\ldots, 0).$ Then there exists a nonnegative constant $C(\delta),$ depending only on $\delta,$ such that \begin{eqnarray*}
&&\int_{\partial B_{n}(r)}\log^{+}\left|\frac{f(z+\tilde{c}_{j})}{f(z)}\right|\sigma_{n}(z)\\&\leq& \frac{8\pi|c_{j}|^{\delta} C(\delta)}{\delta(1-\delta)}\left(\frac{R}{r}\right)^{2n-2}\frac{n_{f}(R, \infty)+n_{f}(R, 0)}{r^{\delta}}\\&&+\frac{4\pi|c_{j}|}{1-\delta}\left(\frac{R}{r}\right)^{2n-2}\left(\frac{R}{R-(r+|c_{j}|)}\right)\left(\frac{R}{R-r}\right)^{1-\delta}\frac{m_{f}(r, \infty)+m_{f}(r,0)}{\sqrt{R^{2}-r^{2}}}
\end{eqnarray*}for all $R>r+|c_{j}|>|c_{j}|.$
\end{lemma}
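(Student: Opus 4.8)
The plan is to reduce the several-variable estimate to a one-variable one by slicing in the single coordinate direction that the shift $\tilde{c}_{j}$ moves, and then to integrate the one-variable bound back over the sphere $\partial B_{n}(r)$. Write $\mathbb{C}^{n}=\mathbb{C}_{z_{j}}\times\mathbb{C}^{n-1}_{w}$, where $w$ collects the coordinates other than $z_{j}$, and for fixed $w$ set $g_{w}(\zeta)=f(z_{1},\ldots,\zeta,\ldots,z_{n})$, a meromorphic function of the single variable $\zeta$. Since $\tilde{c}_{j}$ shifts only $z_{j}$, we have $\log^{+}|f(z+\tilde{c}_{j})/f(z)|=\log^{+}|g_{w}(z_{j}+c_{j})/g_{w}(z_{j})|$ at every point, so it suffices to control the one-variable logarithmic difference on each slice and then average.

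First I would establish the one-variable estimate. Applying the Poisson--Jensen formula to $g_{w}$ on a disc of radius comparable to $R$ and subtracting the formula evaluated at $\zeta+c_{j}$ from the one at $\zeta$, one splits $\log|g_{w}(\zeta+c_{j})/g_{w}(\zeta)|$ into a Poisson-kernel-difference term and a Blaschke-difference term. The kernel difference is of size $O(|c_{j}|)$ times the Poisson kernel, so it is bounded by the boundary integral of $\log^{+}|g_{w}|$ weighted by the factors $R/(R-(r+|c_{j}|))$, $(R/(R-r))^{1-\delta}$, and $1/\sqrt{R^{2}-r^{2}}$ that appear in the second term of the claim. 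The Blaschke difference involves $\big|\log|\zeta-a|-\log|\zeta+c_{j}-a|\big|$ summed over the zeros and poles $a$ of $g_{w}$ inside the disc; here the parameter $\delta$ enters to tame the non-integrable logarithmic singularity, the elementary bound $\big|\log|\zeta-a|-\log|\zeta+c_{j}-a|\big|\leq C|c_{j}|^{\delta}|\zeta-a|^{-\delta}$ being integrable over the boundary circle and producing the factor $C(\delta)/(\delta(1-\delta))$.

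Next I would integrate this slice-wise bound over $\partial B_{n}(r)$ against $\sigma_{n}$. Using the unitary invariance of $\sigma_{n}$ together with Fubini, the boundary-proximity contribution averages to $m_{f}(r,\infty)+m_{f}(r,0)$, carrying along exactly the kernel factors noted above, while the summed zero and pole contribution averages to the genuine $n$-dimensional counting $n_{f}(R,\infty)+n_{f}(R,0)$ together with the decay $r^{-\delta}$. The geometric factor $(R/r)^{2n-2}$ arises in both terms from comparing the $(2n-1)$-dimensional spherical measure at radius $r$ with the one-dimensional slices, and from the $(2n-2)$-dimensional transverse spreading of the zero and pole sets between the radii $r$ and $R$. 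Collecting the two contributions and pulling out the absolute constants yields the stated inequality for all $R>r+|c_{j}|>|c_{j}|$.

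The main obstacle is the several-variable bookkeeping rather than any single sharp estimate: one must set up the slicing so that the one-variable Poisson--Jensen bound can be integrated uniformly in the transverse variable $w$, verify that the $\delta$-power control of the singular Blaschke integral holds uniformly across slices, and---most delicately---convert the one-dimensional zero and pole counts on the slices into the $n$-dimensional counting function $n_{f}(R,\cdot)$ with the correct geometric factor $(R/r)^{2n-2}$. Tracking these Jacobian and measure-comparison constants carefully is precisely what makes the explicit form of the bound, including the appearance of $C(\delta)$ and the denominator $\delta(1-\delta)$, come out as stated.
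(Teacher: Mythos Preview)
The paper does not give its own proof of this lemma: it is quoted verbatim from Korhonen's earlier paper \cite{korhonen-1} (with the minor remark that the hypothesis $f(0)\neq 0,\infty$ can be dropped). So there is no in-paper argument to compare against; the relevant comparison is with Korhonen's original proof, and your outline does follow the same architecture he uses there---reduce to a one-variable logarithmic-difference estimate on each complex line in the $z_{j}$-direction via Poisson--Jensen, then average over $\partial B_{n}(r)$ using the coarea/Fubini structure of $\sigma_{n}$ to recover the $n$-variable counting and proximity functions and the factor $(R/r)^{2n-2}$.

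One point in your sketch is not right as written and would need repair. The ``elementary bound'' you invoke,
\[
\bigl|\log|\zeta-a|-\log|\zeta+c_{j}-a|\bigr|\;\leq\;C\,|c_{j}|^{\delta}\,|\zeta-a|^{-\delta},
\]
is false pointwise (take $\zeta+c_{j}\to a$ with $\zeta$ bounded away from $a$: the left side blows up while the right stays bounded). What Korhonen actually does is integrate first: for each zero or pole $a$ one bounds the circle average $\int_{|\zeta|=r}\bigl|\log|\zeta-a|-\log|\zeta+c_{j}-a|\bigr|\,d\theta$ by splitting into the region where $|\zeta-a|$ (or $|\zeta+c_{j}-a|$) is small and its complement, and it is this \emph{integrated} quantity that is controlled by $C(\delta)\,|c_{j}|^{\delta}r^{-\delta}/(\delta(1-\delta))$. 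If you rewrite that step as an integral estimate rather than a pointwise one, your sketch matches the cited proof.
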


Now we give the proof of our version of logarithmic difference lemma.\par

\begin{proof}[Proof of Theorem \ref{T1}.] By the definition of counting function, we have $$n_{f}(r, \infty)+n_{f}(r,0)\leq\frac{R}{R-r}\left(N(R,f)+N(R, \frac{1}{f})\right)$$
for all $R>r.$ Then it follows by Lemma \ref{L3} and the first main theorem that there exists a positive constant $K_{1},$ depending only on $c_{j}=(0,\ldots, 0, c_{j}, 0,\ldots, 0)$ and $\delta^{'}\in(\frac{1}{4}, 1),$ such that \begin{eqnarray}\label{E2}m(r, \frac{f(z+\tilde{c}_{j})}{f(z)})&=&\int_{\partial B_{n}(r)}\log^{+}\left|\frac{f(z+\tilde{c}_{j})}{f(z)}\right|\sigma_{n}(z)\\\nonumber
&\leq&K_{1}K_{2}(r, R)\left(T(R, f)+\log\frac{1}{|f(0)|}\right)
\end{eqnarray} for all $R>r+|c_{j}|>|c_{j}|,$ where $$K_{2}(r, R)=\left(\frac{R}{r}\right)^{2n-2}\left(\frac{1}{R-(r+|c_{j}|)}\right)\left(\frac{R}{\sqrt{R^{2}-r^{2}}}
\left(\frac{R}{R-r}\right)^{1-\delta^{'}}+\frac{1}{r^{\delta^{'}}}\right).$$\par

Under the assumption of \eqref{E1}. Take $R=(r+|c_{j}|)+\frac{(r+|c_{j}|)^{\delta}}{(\log T(r+|c_{j}|, f))^{\delta}}, \delta\in(0, \frac{1}{2}).$ Then for sufficiently large $r,$
\begin{eqnarray*}
\frac{1}{R-(r+|c_{j}|)}&=&\left(\frac{\log T(r+|c_{j}|, f)}{r+|c_{j}|}\right)^{\delta}=o(1),
\end{eqnarray*}
\begin{eqnarray*}
\frac{R}{r}&=&1+\frac{|c_{j}|}{r}+\frac{(r+|c_{j}|)^{\delta}}{r})\frac{1}{(\log T(r+|c_{j}|, f))^{\delta}}
=o(1)
\end{eqnarray*}
and
\begin{eqnarray*}
\frac{R}{\sqrt{R^{2}-r^{2}}}\left(\frac{R}{R-r}\right)^{1-\delta^{'}}
= \frac{\frac{R}{r}}{\sqrt{\frac{R}{r}-1}}\left(\frac{\frac{R}{r}}{\frac{R}{r}-1}\right)^{1-\delta^{'}}
=o(1).
\end{eqnarray*} Combining these with \eqref{E2},   \begin{eqnarray}\label{E6}
m(r, \frac{f(z+\tilde{c}_{j})}{f(z)})&\leq& o(1)\left(T(R, f)+\log\frac{1}{|f(0)|}\right)
\end{eqnarray} for all sufficiently large $r.$ Moreover, under the assumption \eqref{E1}, it follows from Lemma \ref{L1} that for any $\varepsilon^{'}>0$ and $\phi(r)=\frac{r+|c_{j}|}{\log T(r+|c_{j}|, f)},$  \begin{eqnarray*}
T(R, f)\leq (1+\varepsilon^{'}(r)) T(r+|c_{j}|, f)\leq (1+\varepsilon^{'}(r))^{2} T(r, f)
\end{eqnarray*}  holds for all $r\not\in E_{1}$ where $\overline{dens}E_{1}=0.$ Hence, \eqref{E6} yields
\begin{eqnarray}\label{E8}
m(r, \frac{f(z+\tilde{c}_{j})}{f(z)})=\int_{\partial B_{n}(r)}\log^{+}\left|\frac{f(z+\tilde{c}_{j})}{f(z)}\right|\sigma_{n}(z)= o\left(T(r, f)\right)
\end{eqnarray} for all $r$ possibly outside  the set $E_{1}$ with $\overline{dens}E_{1}=0.$ \par

Now for any $c\in\mathbb{C}^{n},$ it can be written as $c=\tilde{c}_{1}+\cdots +\tilde{c}_{n}.$ Take $\tilde{c}_{0}=0.$ Since
\begin{eqnarray*}
&&\frac{f(z+c)}{f(z)}\\&=&\frac{f(z+(c_{1}, \ldots, c_{n}))}{f(z+(c_{1},\ldots, c_{n-1}, 0))}
\cdot\frac{f(z+(c_{1}, \ldots, c_{n-1}, 0))}{f(z+(c_{1}, \ldots, c_{n-2}, 0, 0))}\cdots\frac{f(z+(c_{1}, 0,\ldots,0))}{f(z+(0, \ldots, 0))}\\
&=&\frac{f(z+\sum_{j=0}^{n}\tilde{c}_{j})}{f(z+\sum_{j=0}^{n-1}\tilde{c}_{j})}\cdot\frac{f(z+\sum_{j=0}^{n-1}\tilde{c}_{j})}{f(z+\sum_{j=0}^{n-2}\tilde{c}_{j})}
\cdots\frac{f(z+\sum_{j=0}^{1}\tilde{c}_{j}))}{f(z+\tilde{c}_{0})},
\end{eqnarray*}
we get from \eqref{E8} that
\begin{eqnarray}\label{E9}
m(r, \frac{f(z+c)}{f(z)})&=&\sum_{j=1}^{n}o(T(r, f(z+\sum_{k=0}^{j-1}\tilde{c}_{k})))
\end{eqnarray}for all $r$ possibly outside the set $E_{1}$ with  $\overline{dens}E_{1}=0.$\par

Next, we assert that \begin{equation}\label{E10}T(r, f(z+c))=T(r, f)+o(T(r, f))\end{equation} for any $c=(c_{1}, c_{2}, \ldots, c_{n})$ and for all $r$ possibly outside a set $F$ with  $\overline{dens}(F)=0.$ In fact, by the fist main theorem and \eqref{E1}, we have $$\limsup_{r\rightarrow\infty}\frac{\log N(r,f) }{r}\leq\limsup_{r\rightarrow\infty}\frac{\log T(r, f) }{r}=0.$$ Then by Lemma \ref{L1} we get that
\begin{equation}\label{E11} N(r+h, f)=(1+o(1))N(r, f)\end{equation} holds for any constant $h(>0)$ independently on $r$ and all $r\not\in E_{2}$ with $\overline{dens}E_{2}=0.$ Hence, it follows from \eqref{E8} and \eqref{E11} that
\begin{eqnarray*}
T(r, f(z+\tilde{c}_{j})&=&m(r, f(z+\tilde{c}_{j}))+N(r, f(z+\tilde{c}_{j}))\\\nonumber
&\leq& m(r, \frac{f(z+\tilde{c}_{j})}{f(z)})+m(r,f)+N(r+|\tilde{c}_{j}|, f)\\\nonumber
&=& m(r, \frac{f(z+\tilde{c}_{j})}{f(z)})+m(r,f)+N(r, f)+o(N(r,f))\\\nonumber
&=&T(r, f)+o(T(r, f))
\end{eqnarray*}for all $r$ possibly outside the set $E_{1}\cup E_{2}$ with  $\overline{dens}(E_{1}\cup E_{2})=0.$ Thus, it deduces that
\begin{eqnarray*}
T(r, f(z+c))&\leq& T(r, f(z+(c_{1}, \ldots, c_{n-1}, 0)))+o(T(r, f(z+(c_{1}, \ldots, c_{n-1}, 0))))\\
&\leq& T(r, f(z+(c_{1}, \ldots, c_{n-1}, 0)))+o(T(r, f(z+(c_{1}, \ldots, c_{n-2}, 0, 0)))\\
&\vdots&\\
&\leq& T(r, f(z+(c_{1}, 0,\ldots, 0)))+o(T(r,f))\\
&\leq& T(r, f)+o(T(r, f))
\end{eqnarray*} for all $r$ possibly outside the set $F=E_{1}\cup E_{2}$ with  $\overline{dens}F=0.$ Note that $f(z)=f((z+c)-c)).$ Then we get the assertion.\par

Therefore, the theorem is got immediately from \eqref{E9} and \eqref{E10}.
\end{proof}

From the proof of Theorem \ref{T1}, we have the assertion \eqref{E10}. Since the relation between $T(r, f(z))$ and $T(r, f(z+c))$ is very useful to study solutions of complex difference equations, we here rewrite it as a theorem.\par

\begin{theorem}\label{T5} Let $f$ be a nonconstant meromorphic function on $\mathbb{C}^{n}$ with \begin{eqnarray*}
\limsup_{r\rightarrow\infty}\frac{\log T(r, f)}{r}=0,\end{eqnarray*} then
\begin{eqnarray*}
T(r, f(z+c))=T(r, f)+o(T(r, f))
\end{eqnarray*} holds for any constant $c\in\mathbb{C}^{n}\setminus\{0\}$ and all $r\not\in E$ with $\overline{dens}E=0.$
\end{theorem}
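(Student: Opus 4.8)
The plan is to extract Theorem~\ref{T5} directly from the work already done inside the proof of Theorem~\ref{T1}, since the assertion \eqref{E10} established there is precisely the present statement. Concretely, I would first record the one-variable-increment estimate: for each coordinate shift $\tilde{c}_j=(0,\ldots,0,c_j,0,\ldots,0)$, combine the logarithmic difference bound \eqref{E8}, the triangle-type inequality for the proximity function $m(r,f(z+\tilde{c}_j))\le m(r,\frac{f(z+\tilde{c}_j)}{f(z)})+m(r,f)$, and the crude counting-function shift $N(r,f(z+\tilde{c}_j))\le N(r+|\tilde{c}_j|,f)$ coming from the fact that a shift by $\tilde{c}_j$ moves the poles by at most $|\tilde{c}_j|$ in norm. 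The first term is $o(T(r,f))$ off a set of zero upper density by \eqref{E8}, and the third term is handled by the next step.

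The key auxiliary input is Lemma~\ref{L1} applied to the counting function $N(r,f)$. Since the first main theorem gives $N(r,f)\le T(r,f)+O(1)$, the hypothesis $\limsup_{r\to\infty}\frac{\log T(r,f)}{r}=0$ forces $\limsup_{r\to\infty}\frac{\log N(r,f)}{r}=0$ as well; $N(r,f)$ is nondecreasing, positive, continuous and logarithmic convex, so Lemma~\ref{L1} (via the accompanying Remark, which upgrades the conclusion to an arbitrary fixed increment $h$) yields $N(r+h,f)=(1+o(1))N(r,f)$ for every fixed $h>0$ and all $r$ outside a set $E_2$ with $\overline{dens}\,E_2=0$. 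Taking $h=|\tilde{c}_j|$ and feeding this into the display above produces
\[
T(r,f(z+\tilde{c}_j))\le T(r,f)+o(T(r,f)),\qquad r\notin E_1\cup E_2 .
\]

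I would then pass from a single coordinate shift to a general $c=(c_1,\ldots,c_n)$ by the telescoping factorization used in \eqref{E9}: write the shift by $c$ as a composition of the $n$ coordinate shifts $\tilde{c}_1,\ldots,\tilde{c}_n$ and apply the one-step inequality successively, each time absorbing the error term into an $o$ of the characteristic function of the previous intermediate shift. Because each intermediate characteristic is itself comparable to $T(r,f)$ up to $o(T(r,f))$, the accumulated error remains $o(T(r,f))$ over the finitely many ($n$) steps, giving $T(r,f(z+c))\le T(r,f)+o(T(r,f))$ off a set $F$ of zero upper density. The reverse inequality is obtained for free by symmetry: applying the upper bound to the meromorphic function $g(z)=f(z+c)$ with shift $-c$ and using $f(z)=g(z-c)$ gives $T(r,f)\le T(r,f(z+c))+o(T(r,f(z+c)))$, and combining the two directions yields the claimed equality.

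The main obstacle, and the only place demanding genuine care rather than bookkeeping, is the control of the exceptional set: each coordinate step contributes its own zero-upper-density sets $E_1,E_2$, and one must check that the finite union over the $n$ telescoping steps (and over the two symmetric directions) still has zero upper density. This is immediate since $\overline{dens}$ is finitely subadditive, but it is worth stating explicitly so that the final set $F$ is genuinely of zero upper density rather than merely zero lower density. A secondary subtlety is ensuring that the $o(\cdot)$ terms attached to the \emph{intermediate} characteristics can all be rewritten as $o(T(r,f))$; this is legitimate precisely because the first main theorem together with \eqref{E8} guarantees each intermediate characteristic equals $T(r,f)+o(T(r,f))$ on the relevant exceptional-set complement.
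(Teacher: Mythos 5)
Your proposal is correct and takes essentially the same approach as the paper: Theorem \ref{T5} is there obtained as the assertion \eqref{E10} inside the proof of Theorem \ref{T1}, using precisely your ingredients --- the estimate \eqref{E8} together with $N(r,f(z+\tilde{c}_{j}))\leq N(r+|\tilde{c}_{j}|,f)$ and Lemma \ref{L1} (via the accompanying Remark) applied to $N(r,f)$, the telescoping over the coordinate shifts $\tilde{c}_{1},\ldots,\tilde{c}_{n}$, and the observation $f(z)=f((z+c)-c)$ for the reverse inequality. Your explicit remarks on the finite subadditivity of $\overline{dens}$ and on rewriting the intermediate error terms as $o(T(r,f))$ merely spell out details the paper leaves implicit.
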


If using the Hinkkanen's Borel type Growth Lemma but not Lemma \ref{L1}, we can obtain another form of the logarithmic difference lemma  as follows. A tropical version is also given by Cao and Zheng \cite{cao-zheng-2018} at the same time.\par

\begin{theorem}\label{T1''}
Let $f$ be a nonconstant meromorphic function on $\mathbb{C}^{n},$ and let $c\in\mathbb{C}^{n}\setminus\{0\}.$ If \begin{eqnarray}\label{E1'}\limsup_{r\rightarrow\infty}\frac{\log T(r, f)(\log r)^{\varepsilon}}{r}=0,\end{eqnarray} for any $\varepsilon(>0),$ then
\begin{eqnarray*}
m\left(r, \frac{f(z+c)}{f(z)}\right)+m\left(r, \frac{f(z)}{f(z+c)}\right)=o\left(T(r, f)\right)
\end{eqnarray*} for all $r\not\in E,$ where $E$ is a set with $\int_{E}\frac{dt}{t\log t}<+\infty$ which implies $E$  with zero upper logarithmic density measure i.e., $$\overline{dens}E=\limsup_{r\rightarrow\infty}\frac{1}{\log r}\int_{E\cap [1,r]}\frac{dt}{t}=0.$$
\end{theorem}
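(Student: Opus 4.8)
The plan is to adapt the proof of Theorem \ref{T1} almost verbatim, changing only which growth lemma is invoked and tracking the corresponding change in the exceptional set. The entire analytic estimate leading to \eqref{E2} and \eqref{E6}, namely the application of Korhonen's Lemma \ref{L3} followed by the first main theorem and the choice $R=(r+|c_j|)+(r+|c_j|)^\delta/(\log T(r+|c_j|,f))^\delta$ with $\delta\in(0,\tfrac12)$, depends only on establishing the three factors $K_2(r,R)$ vanish as $r\to\infty$. That argument uses the hypothesis \eqref{E1} only through the fact that $(\log T(r,f))/r\to 0$, which is implied by the stronger assumption \eqref{E1'}. So up to formula \eqref{E6} nothing needs to change, and I would reproduce those estimates word-for-word with \eqref{E1} replaced by \eqref{E1'}.

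The one genuine modification is at the step where Lemma \ref{L1} is used to pass from $T(R,f)$ back to $T(r,f)$. Here I would instead invoke the Hinkkanen Borel-type growth lemma: for a nondecreasing positive logarithmic-convex $T(r)\to\infty$ satisfying $\limsup_{r\to\infty}T(r)(\log r)^\varepsilon/r=0$, one has $T(r+\psi(r))\le(1+o(1))T(r)$ outside a set $E$ with $\int_E \frac{dt}{t\log t}<+\infty$, for an appropriate increment $\psi$. First I would verify that the increment $R-(r+|c_j|)=(r+|c_j|)^\delta/(\log T(r+|c_j|,f))^\delta$ is dominated by the admissible Borel increment $\psi(r+|c_j|)$ for large $r$, so that the growth lemma applies to give $T(R,f)\le(1+o(1))T(r,f)$ for $r\notin E$. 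Substituting into \eqref{E6} yields, exactly as in \eqref{E8},
\begin{eqnarray*}
m\left(r,\frac{f(z+\tilde{c}_j)}{f(z)}\right)=o(T(r,f)),\quad r\notin E.
\end{eqnarray*}
The telescoping decomposition of $f(z+c)/f(z)$ into the $n$ coordinate shifts, together with the relation $T(r,f(z+\tilde c_j))=T(r,f)+o(T(r,f))$ established via the same Borel estimate on $N(r,f)$, then completes the proof of both the $f(z+c)/f(z)$ bound and its reciprocal, precisely as in the final paragraphs of the proof of Theorem \ref{T1}.

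The main obstacle I anticipate is bookkeeping for the exceptional set rather than any deep estimate: I must confirm that the Borel growth lemma yields exactly the convergence $\int_E \frac{dt}{t\log t}<+\infty$ claimed in the statement, and that this condition indeed forces the upper \emph{logarithmic} density of $E$ to vanish (this implication is a standard consequence, since if $\int_E\frac{dt}{t\log t}$ converges then the tail contributions $\frac{1}{\log r}\int_{E\cap[1,r]}\frac{dt}{t}$ tend to $0$). A secondary technical point is that the various exceptional sets arising from the shift estimate and from the counting-function estimate \eqref{E11} must all be combined, and I must check that a finite union of such sets again satisfies the same finiteness of $\int\frac{dt}{t\log t}$, which is immediate since the integral is additive over the union. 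Beyond these routine verifications the argument is structurally identical to Theorem \ref{T1}.
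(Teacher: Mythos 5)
Your proposal is correct and rests on the same key tool as the paper's proof: Hinkkanen's Borel-type growth lemma (Lemma \ref{L2}) substituted for Lemma \ref{L1} inside the argument of Theorem \ref{T1}, followed by the same telescoping over coordinate shifts. The mechanics differ in two small but instructive ways. First, the paper does not keep the radius $R=(r+|c_{j}|)+(r+|c_{j}|)^{\delta}/(\log T(r+|c_{j}|,f))^{\delta}$ from Theorem \ref{T1}; it takes $R-(r+|c_{j}|)$ to be exactly the Hinkkanen increment $\frac{(r+|c_{j}|)\log (r+|c_{j}|)}{\log T(r+|c_{j}|,f)(\log\log T(r+|c_{j}|,f))^{1+\varepsilon}}$ (choosing $p(r)=r\log r$ and $\varphi(r)=r\log r(\log\log r)^{1+\varepsilon}$ in Lemma \ref{L2}), re-verifies that $K_{2}(r,R)=o(1)$ using \eqref{E1'} together with $\rho_{2}(f)\leq 1$, and applies the lemma a second time at $r$ to get $T(r+|c_{j}|,f)\leq CT(r,f)$ off the same set. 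Your alternative of keeping the old $R$ and arguing by monotonicity does work, but the domination you defer, namely $(r+|c_{j}|)^{\delta}/(\log T)^{\delta}\leq \psi(r+|c_{j}|)$, reduces to $(\log T)^{1-\delta}(\log\log T)^{1+\varepsilon}\leq r^{1-\delta}\log r$, and this is precisely where the full hypothesis enters: you need \eqref{E1'} with exponent at least $\varepsilon/(1-\delta)$, which is available only because \eqref{E1'} is assumed for \emph{every} positive exponent; condition \eqref{E1} alone would not suffice, so your remark that the estimates through \eqref{E6} use only \eqref{E1} is true but the new hypothesis is still consumed, just at a different step than in the paper. Second, one imprecision: Lemma \ref{L2} yields $T(r+\psi(r))\leq CT(r)$ for a fixed constant $C>1$, not $(1+o(1))T(r)$; this is harmless for the conclusion, since $o(1)\cdot C\,T(r,f)=o(T(r,f))$ and the telescoping only needs a constant-factor comparison of $T(r,f(z+\mbox{shift}))$ with $T(r,f)$, but your claimed intermediate identity $T(r,f(z+\tilde{c}_{j}))=T(r,f)+o(T(r,f))$ does not follow from the constant-$C$ Borel estimate alone. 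Either weaken it to $T(r,f(z+\tilde{c}_{j}))\leq C'T(r,f)$ off the exceptional set, which suffices, or do as the paper does and invoke Theorem \ref{T5} (based on Lemma \ref{L1}, applicable since \eqref{E1'} implies \eqref{E1}); in the latter case the combined exceptional set is the union of a set with $\int\frac{dt}{t\log t}<+\infty$ and a set of zero upper density, which still has zero upper logarithmic density, consistent with the statement. Your bookkeeping remarks on unions of exceptional sets are fine.
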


The next lemma is the Hinkkanen's Borel type growth lemma (or see also a similar lemma
\cite[Lemma 3.3.1]{cherry-ye}.\par

\begin{lemma}\cite[Lemma 4]{hinkkanen} \label{L2} Let $p(r)$ and $h(r)=\varphi(r)/r$ be positive nondecreasing functions defined for $r\geq\varrho>0$ and $r\geq\tau>0,$ respectively, such that $\int_{\varrho}^{\infty}\frac{dr}{p(r)}=\infty$ and $\int_{\tau}^{\infty}\frac{dr}{\varphi(r)}<\infty.$ Let $u(r)$ be a positive nondecreasing function defined for $r\geq r_{0}\geq\varrho$ such that $u(r)\rightarrow\infty$ as $r\rightarrow\infty.$ Then if $C$ is real with $C>1,$ we have $$u(r+\frac{p(r)}{h(u(r))})<Cu(r)$$ whenever $r\geq r_{0},$ $u(r)>\tau,$ and $r\not\in E$ where $$\int_{E}\frac{dr}{p(r)}\leq\frac{1}{h(w)}+\frac{C}{C-1}\int_{w}^{\infty}\frac{dr}{\varphi(r)}<\infty$$ and $w=\max\{\tau, u(r_{0})\}.$
\end{lemma}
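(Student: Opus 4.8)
The plan is to realize the exceptional set $E$ as the failure set of the asserted inequality and to bound its $p$-weighted measure by a greedy covering argument, the at-least-geometric growth of $u$ along the covering producing exactly the series $\frac{1}{h(w)}+\frac{C}{C-1}\int_w^\infty\frac{dr}{\varphi(r)}$. Write $s(r)=r+\frac{p(r)}{h(u(r))}$ and set
\[
E=\left\{r\ge r_0:\ u(r)>\tau\ \text{and}\ u(s(r))\ge Cu(r)\right\}.
\]
If $E=\emptyset$ there is nothing to prove. First I would record two elementary monotonicity facts. Since $h(r)=\varphi(r)/r$ is positive and nondecreasing, $\varphi(r)=r\,h(r)$ is a product of positive nondecreasing factors, hence $\varphi$ is itself nondecreasing; and since every $r\in E$ satisfies $r\ge r_0$ and $u(r)>\tau$ with $u$ nondecreasing, we have $u(r)\ge\max\{\tau,u(r_0)\}=w$ for all $r\in E$.

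Next I would build the covering. I choose $r_1\in E$ as far to the left as possible (taking $r_1=\inf E$, realized inside $E$ up to a harmless upward perturbation), and inductively put $b_n=s(r_n)$ and $r_{n+1}=\inf\{r\in E: r>b_n\}$, again selected inside $E$, the process stopping if this set is empty. By construction no point of $E$ lies in a gap $(b_n,r_{n+1})$, so $E\subseteq\bigcup_n[r_n,b_n]$ (if the chain is infinite then $u(r_n)\to\infty$ forces $r_n\to\infty$, so the intervals exhaust $E$). Two estimates then drive the argument. Because $p$ is nondecreasing, $1/p$ is nonincreasing on $[r_n,b_n]$, so
\[
\int_{[r_n,b_n]}\frac{dr}{p(r)}\le\frac{b_n-r_n}{p(r_n)}=\frac{1}{h(u(r_n))}.
\]
Because $r_{n+1}\ge b_n=s(r_n)$ and $r_n\in E$, monotonicity of $u$ gives $u(r_{n+1})\ge u(s(r_n))\ge Cu(r_n)$; writing $a_n=u(r_n)$ this reads $a_{n+1}\ge Ca_n$, so the $a_n$ grow at least geometrically.

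Finally I would sum. The $n=1$ term is $1/h(a_1)\le 1/h(w)$ since $h$ is nondecreasing and $a_1\ge w$. For $n\ge 2$, using that $\varphi$ is nondecreasing and that $a_{n-1}\le a_n/C$,
\[
\int_{a_{n-1}}^{a_n}\frac{dr}{\varphi(r)}\ge\frac{a_n-a_{n-1}}{\varphi(a_n)}\ge\Big(1-\tfrac1C\Big)\frac{a_n}{\varphi(a_n)}=\frac{C-1}{C}\cdot\frac{1}{h(a_n)},
\]
hence $1/h(a_n)\le \frac{C}{C-1}\int_{a_{n-1}}^{a_n}\frac{dr}{\varphi(r)}$. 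The intervals $[a_{n-1},a_n]$ are disjoint and contained in $[w,\infty)$, so adding $\int_{[r_n,b_n]}\frac{dr}{p(r)}\le 1/h(a_n)$ over all $n$ telescopes to
\[
\int_E\frac{dr}{p(r)}\le\frac{1}{h(w)}+\frac{C}{C-1}\int_w^\infty\frac{dr}{\varphi(r)},
\]
which is finite by hypothesis, while the divergence $\int_\varrho^\infty \frac{dr}{p(r)}=\infty$ is what makes this thinness of $E$ meaningful. I expect the only genuine obstacle to be the covering step: since $s$ need be neither monotone nor continuous, $E$ may fail to be closed and the infima defining $r_1$ and the $r_{n+1}$ need not be attained. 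I would dispose of this by selecting each $r_n$ in $E$ within $\varepsilon_n$ of the relevant infimum and letting $\varepsilon_n\downarrow 0$, noting that every estimate above is monotone in the selection and therefore survives the limit.
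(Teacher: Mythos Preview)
Your argument is essentially correct and follows the classical greedy covering scheme for Borel-type growth lemmas: pick points of the failure set $E$ one after another, observe that $u$ grows at least geometrically along the resulting sequence, bound the $p$-weighted length of each covering interval by $1/h(a_n)$, and compare $1/h(a_n)$ for $n\ge 2$ to the integral of $1/\varphi$ over $[a_{n-1},a_n]$. The estimates you write down are all valid, and the telescoping gives precisely the stated bound.

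Note, however, that the paper does \emph{not} supply its own proof of this lemma: it is simply quoted from Hinkkanen's paper (Lemma~4 there) and then applied in the proof of Theorem~\ref{T1''}. So there is nothing in the present paper to compare your proof against. That said, your approach is exactly the standard one and is, as far as I can tell, the same strategy Hinkkanen uses.

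One small point worth tightening: the covering step when the infima defining $r_1$ and $r_{n+1}$ are not attained. Your remedy of choosing $r_n\in E$ within $\varepsilon_n$ of the true infimum and letting $\varepsilon_n\downarrow 0$ is the right idea, but you should make explicit that the portion of $E$ possibly missed, namely the sets $E\cap(\inf E,\,r_1)$ and $E\cap(b_{n-1},\,r_n)$, has total Lebesgue measure at most $\sum_n\varepsilon_n$, and hence $p$-weighted measure at most $(\sum_n\varepsilon_n)/p(\varrho)$ since $p$ is nondecreasing; taking $\sum_n\varepsilon_n$ arbitrarily small then gives the bound for $\int_E dr/p(r)$ exactly as stated. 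With that detail filled in, the proof is complete.
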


\begin{proof}[Proof of Theorem \ref{T1''}.]  In Lemma \ref{L2}, we take $$u(r)=T(r, f),\quad p(r)=r\log r,$$ and $$h(r)=\frac{\varphi(r)}{r}$$ where $\varphi(r)=r\log r (\log\log  r)^{1+\varepsilon}$ with $\varepsilon>0.$ Then it is obvious that $\int_{\varrho}^{\infty}\frac{dr}{p(r)}=\infty$ and $\int_{\tau}^{\infty}\frac{dr}{\varphi(r)}<\infty$ for $r\geq\varrho>0$ and $r\geq\tau>0.$ Let \begin{eqnarray*}
R&:=&(r+|c_{j}|)+\frac{p(r+|c_{j}|)}{(r+|c_{j}|)h(T_{f}(r+|c_{j}|))}\\&=&(r+|c_{j}|)+\frac{(r+|c_{j}|)\log (r+|c_{j}|)}{\log T_{f}(r+|c_{j}|)(\log\log T_{f}(r+|c_{j}|))^{1+\varepsilon}}.
\end{eqnarray*}

Note that $$T(R, f)=T\left((r+|c_{j}|)+\frac{(r+|c_{j}|)\log (r+|c_{j}|)}{\log T_{f}(r+|c_{j}|)(\log\log T_{f}(r+|c_{j}|))^{1+\varepsilon}}, f\right).$$ Applying Lemma \ref{L2}, we have
\begin{equation}\label{E3}
T(R, f)\leq C T(r+|c_{j}|, f)
\end{equation} for all $r$ possibly outside a set $E_{1}$ satisfying
\begin{equation*}
E_{1}:=\{r\in[r_{0}, \infty): T(R, f)\geq C T(r+|c_{j}|, f) \}
\end{equation*}where \begin{eqnarray*} \int_{E_{1}}\frac{dt}{p(t)}&=&\int_{E_{1}}\frac{dt}{t\log t}\\&\leq&\frac{1}{\log w(\log \log w)^{1+\varepsilon}}+\frac{C}{C-1}\int_{w}^{\infty}\frac{dt}{t \log t(\log\log t)^{1+\varepsilon}}\\&<&+\infty.\end{eqnarray*} This gives \begin{eqnarray*} \overline{logdens}E_{1}&=&\limsup_{r\rightarrow\infty}\frac{1}{\log r}\int_{E_{1}\cap[1,r]}\frac{dt}{t}\\&\leq&\limsup_{r\rightarrow\infty}\frac{\int_{E_{1}\cap[1, \log r]}\frac{dt}{t}}{\log r}+\limsup_{r\rightarrow\infty}\int_{E_{1}\cap[\log r, r]}\frac{dt}{t\log t}\\&\leq&\limsup_{r\rightarrow\infty}\frac{\log\log r}{\log r}+0=0.\end{eqnarray*}

Under the condition \eqref{E1'}, we get that for any $\varepsilon^{'}>0$ and sufficiently large $r,$
\begin{eqnarray}\label{E4}
\frac{\log T(r+|c_{j}|, f)(\log (r+|c_{j}|))^{\varepsilon}}{r+|c_{j}|}<\varepsilon^{'}.
\end{eqnarray} Since \eqref{E1'} implies $\rho_{2}(f)\leq 1$ according to Remark \ref{R0}(i), we have
\begin{eqnarray}\label{E5}
\frac{\log\log T(r+|c_{j}|, f)}{\log (r+|c_{j}|)}\leq 1+\varepsilon^{''}\end{eqnarray}
for any $\varepsilon^{''}>0$ and sufficiently large $r.$ Then \eqref{E4} and \eqref{E5} give that for sufficiently large $r,$
\begin{eqnarray*}
\frac{1}{R-(r+|c_{j}|)}&=&\frac{\log T(r+|c_{j}|, f)(\log\log T(r+|c_{j}|, f))^{1+\varepsilon}}{(r+|c_{j}|)\log (r+|c_{j}|)}\\
&=&\frac{\log T(r+|c_{j}|, f) (\log (r+|c_{j}|))^{\varepsilon}}{r+|c_{j}|}\left(\frac{\log\log T(r+|c_{j}|, f)}{\log (r+|c_{j}|)}\right)^{1+\varepsilon}\\
&\leq&\varepsilon^{'} (1+\varepsilon^{''})^{1+\varepsilon},
\end{eqnarray*}

\begin{eqnarray*}
\frac{R}{r}&=&1+\frac{|c_{j}|}{r}+(1+\frac{|c_{j}|}{r})\frac{\log (r+|c_{j}|)}{\log T(r+|c_{j}|, f)(\log\log T(r+|c_{j}|, f))^{1+\varepsilon}}\\
&=&1+\frac{|c_{j}|}{r}+(1+\frac{|c_{j}|}{r})\frac{1}{(\log (r+|c_{j}|))^{\varepsilon}\log T(r+|c_{j}|, f)\left(\frac{\log\log T(r+|c_{j}|, f)}{\log (r+|c_{j}|)}\right)^{1+\varepsilon}}\\
&=&o(1)
\end{eqnarray*}
and
\begin{eqnarray*}
\frac{R}{\sqrt{R^{2}-r^{2}}}\left(\frac{R}{R-r}\right)^{1-\delta^{'}}
= \frac{\frac{R}{r}}{\sqrt{\frac{R}{r}-1}}\left(\frac{\frac{R}{r}}{\frac{R}{r}-1}\right)^{1-\delta^{'}}
=o(1).
\end{eqnarray*}
Combining these with \eqref{E2} and \eqref{E3},   \begin{eqnarray}\label{E6'}
m(r, \frac{f(z+\tilde{c}_{j})}{f(z)})&\leq& o(1)\left(T(r+|c_{j}|, f)+\log\frac{1}{|f(0)|}\right)
\end{eqnarray} for all $r$ possibly outside a set $E_{1}$ with $\int_{E_{1}}\frac{dt}{t\log t}<+\infty.$\par

By \eqref{E3}, we also have $$T\left(r+\frac{r\log r}{\log T(r, f)(\log\log T(r, f))^{1+\varepsilon}}, f\right)\leq C T(r, f)$$ for all $r\not\in E_{1}.$ It follows from \eqref{E4} and \eqref{E5} that
\begin{eqnarray*}
\frac{\log T(r, f)(\log r)^{\varepsilon}}{r}<\varepsilon^{'}
\end{eqnarray*} and
\begin{eqnarray*}
\frac{\log\log T(r, f)}{\log r)}\leq 1+\varepsilon^{''}.\end{eqnarray*} Thus it yields that
$$ \frac{r\log r}{\log T(r, f)(\log\log T(r, f))^{1+\varepsilon}}\rightarrow\infty$$
as $r\rightarrow\infty.$ Then we have $$r+|c_{j}|\leq r+\frac{r\log r}{\log T(r, f)(\log\log T(r, f))^{1+\varepsilon}}$$ for sufficiently large $r.$ Hence, \begin{eqnarray}\label{E7}T(r+|c_{j}|, f)\leq T(r+\frac{r\log r}{\log T(r, f)(\log\log T(r, f))^{1+\varepsilon}}, f)\leq C T(r ,f)\end{eqnarray} for all $r\not\in E_{1}.$ Therefore,  we get from \eqref{E6'} and \eqref{E7} that the equation \eqref{E8} is still valid for $r$ possibly outside  the set $E_{1}.$ Using as the same reason as in the proof of Theorem \ref{T1} to get \eqref{E9}, we then get immediately the conclusion of the theorem from \eqref{E9} and Theorem \ref{T5}.
\end{proof}

In the proof of Theorem \ref{T1''}, we do not know how to improve  the condition \eqref{E1'} by \eqref{E1}  whenever using the Hinkkanen's Borel type Growth Lemma (Lemma \ref{L2}). The difficulty we met is how to give well defined functions $p(r)$ and $\varphi(r)$ when applying Lemma \ref{L2}. After finished this paper, we learn that Korhonen- Tohge-Zhang-Zheng \cite[Lemma 3.1]{korhonen-tohge-zhang-zheng} recently obtained a similar result on the logarithmic  difference lemma for meromorphic functions in one variable under the assumption of
\begin{eqnarray}\label{E-1.5}\log T(r,f)\leq \frac{r}{(\log r)^{2+\nu}}\end{eqnarray}
for any $\nu(>0).$ It is easy to see that this assumption \eqref{E-1.5} is stronger than \eqref{E1'}. Hence, Theorem \ref{T1''} (and thus Theorem \ref{T1}) is an improvement and extension of their result. \par

For study on the solutions of complex partial difference equations, we next prove another form of the logarithmic difference lemma for meromorphic functions with finite order in several complex variables. This is an extension of \cite[Corollary 2.5]{chiang-feng} from one variable to several variables.\par

\begin{theorem}\label{T1'}
Let $f$ be a nonconstant meromorphic function on $\mathbb{C}^{n}$ and let $c\in\mathbb{C}^{n}\setminus\{0\}.$ If $f$ is of finite order, then
\begin{eqnarray*}
m\left(r, \frac{f(z+c)}{f(z)}\right)+m\left(r, \frac{f(z)}{f(z+c)}\right)=O\left(r^{\rho(f)-1+\varepsilon}\right)
\end{eqnarray*} holds for any $\varepsilon(>0).$
\end{theorem}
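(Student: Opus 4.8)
The plan is to reuse the estimate \eqref{E2} coming from Lemma \ref{L3}, but to exploit the finite order of $f$ by taking the auxiliary radius $R$ comparable to $r$ rather than infinitesimally close to it; this is exactly what converts the factor $o(1)$ appearing in the proof of Theorem \ref{T1} into an explicit negative power of $r$. First I would reduce, as in Theorem \ref{T1}, to the case of a single coordinate shift $\tilde{c}_j=(0,\dots,0,c_j,0,\dots,0)$, for which \eqref{E2} gives $m(r,f(z+\tilde{c}_j)/f(z))\le K_1 K_2(r,R)\bigl(T(R,f)+\log\frac1{|f(0)|}\bigr)$ with $K_1$ a constant and $\delta'\in(\tfrac14,1)$ fixed. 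Since $f$ has finite order, for every $\varepsilon>0$ we have $T(r,f)\le r^{\rho(f)+\varepsilon/2}$ for all sufficiently large $r$.

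Next I would set $R=2r$. Then $(R/r)^{2n-2}=2^{2n-2}$, $\frac{R}{\sqrt{R^2-r^2}}=\frac{2}{\sqrt3}$, $\bigl(\frac{R}{R-r}\bigr)^{1-\delta'}=2^{1-\delta'}$ and $r^{-\delta'}$ are all $O(1)$, while $\frac{1}{R-(r+|c_j|)}=\frac1{r-|c_j|}=O(1/r)$; hence $K_2(r,2r)=O(1/r)$. Substituting $T(2r,f)\le(2r)^{\rho(f)+\varepsilon/2}$ into \eqref{E2} then yields $m(r,f(z+\tilde{c}_j)/f(z))=O(r^{-1})\cdot O(r^{\rho(f)+\varepsilon/2})=O(r^{\rho(f)-1+\varepsilon})$. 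Because the bound in Lemma \ref{L3} is symmetric in $f$ and $1/f$ (it involves $n_f(R,\infty)+n_f(R,0)$ and $m_f(r,\infty)+m_f(r,0)$), the same estimate holds for $m(r,f(z)/f(z+\tilde{c}_j))$.

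To pass to a general $c$, I would telescope as in \eqref{E9}, writing $f(z+c)/f(z)$ as a product of $n$ single-coordinate shift quotients of the translated functions $g_j(z)=f(z+\tilde{c}_1+\cdots+\tilde{c}_{j-1})$. For this I first record that translation preserves finite order: from $T(r,f)=O(r^{\rho(f)+\varepsilon})$ one gets $n(t,f)=O(t^{\rho(f)+\varepsilon})$ and hence $N(r+h,f)-N(r,f)=\int_r^{r+h}\frac{n(t,f)}{t}\,dt=O(r^{\rho(f)-1+\varepsilon})$; combining this with the single-shift proximity estimate above gives $T(r,f(z+\tilde{c}_j))=T(r,f)+O(r^{\rho(f)-1+\varepsilon})$, so every $g_j$ again has order $\rho(f)$. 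Applying the single-shift estimate to each $g_j$ and summing the $n$ terms then produces $m(r,f(z+c)/f(z))+m(r,f(z)/f(z+c))=O(r^{\rho(f)-1+\varepsilon})$, as required.

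The main obstacle is precisely this last bookkeeping: one must know that all the intermediate translates $g_j$ retain order $\rho(f)$, so that the single-coordinate estimate can be applied uniformly across the telescoping product, and that the accumulated error over the $n$ factors is still $O(r^{\rho(f)-1+\varepsilon})$ rather than a genuinely larger power. The shift-invariance of the order, established through the counting-function increment bound, is what makes the induction close; everything else is the same as the proof of Theorem \ref{T1} with the single change $R\asymp r$.
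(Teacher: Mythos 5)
Your proposal is correct and follows essentially the same route as the paper: the single-coordinate estimate \eqref{E2} derived from Lemma \ref{L3}, with the choice $R=2r$ so that $K_{2}(r,2r)=O(1/r)$ and $T(2r,f)=O(r^{\rho(f)+\varepsilon/2})$, followed by the same telescoping product over the coordinate shifts $\tilde{c}_{j}$. The only (harmless) deviation is that where the paper invokes Theorem \ref{T5} to conclude that the intermediate translates retain order $\rho(f)$, you prove this directly from the counting-function increment bound $N(r+h,f)-N(r,f)=O\left(r^{\rho(f)-1+\varepsilon}\right)$, a self-contained argument that in effect reproves the relevant parts of Theorems \ref{T6} and \ref{T10}.
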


\begin{proof} Since $f$ is of finite order, $T(r, f)\leq r^{\rho(f)+\varepsilon}$ holds for any $\varepsilon>0.$ Take $R=2r.$ Then it follows from \eqref{E2} that
\begin{eqnarray*}m(r, \frac{f(z+\tilde{c}_{j})}{f(z)})=O(r^{\rho(f)-1+\varepsilon}).\end{eqnarray*}
For any $c\in\mathbb{C}^{n}$ which can be written as $c=\tilde{c}_{1}+\cdots +\tilde{c}_{n}.$ Take $\tilde{c}_{0}=0.$ Since
\begin{eqnarray*}
&&\frac{f(z+c)}{f(z)}\\&=&\frac{f(z+(c_{1}, \ldots, c_{n}))}{f(z+(c_{1},\ldots, c_{n-1}, 0))}
\cdot\frac{f(z+(c_{1}, \ldots, c_{n-1}, 0))}{f(z+(c_{1}, \ldots, c_{n-2}, 0, 0))}\cdots\frac{f(z+(c_{1}, 0,\ldots,0))}{f(z+(0, \ldots, 0))}\\
&=&\frac{f(z+\sum_{j=0}^{n}\tilde{c}_{j})}{f(z+\sum_{j=0}^{n-1}\tilde{c}_{j})}\cdot\frac{f(z+\sum_{j=0}^{n-1}\tilde{c}_{j})}{f(z+\sum_{j=0}^{n-2}\tilde{c}_{j})}
\cdots\frac{f(z+\sum_{j=0}^{1}\tilde{c}_{j}))}{f(z+\tilde{c}_{0})},
\end{eqnarray*}
we then get that \begin{eqnarray}&&m(r, \frac{f(z+c)}{f(z)})\\\nonumber
&=&O\left(r^{\rho(f)-1+\varepsilon}+r^{\rho(f(z+\sum_{j=0}^{1}\tilde{c}_{j}))-1+\varepsilon}+\ldots+r^{\rho((f(z+\sum_{j=0}^{n-1}\tilde{c}_{j}))-1+\varepsilon}\right).\end{eqnarray}
The assumption $\rho(f)<\infty$ implies that we can get from Theorem \ref{T5} that $\rho(f)=\rho(f(z+\sum_{j=0}^{1}\tilde{c}_{j}))=\ldots=\rho(f((z+\sum_{j=0}^{n-1}\tilde{c}_{j})))=\rho(f(z+c)).$ Therefore, the conclusion of this theorem is true.\end{proof}

By Lemma \ref{L1}, one can get that $N(r+|c|,f)=N(r, f)+o(N(r,f))$ for $r\not\in E$ with $\overline{dens}E=0$ under the assumption of $\limsup_{r\rightarrow\infty}\frac{\log N(r,f)}{r}=0.$ Note that $N(r, f(z+c))\leq N(r+|c|, f)$ by the definition of counting function. Hence, provided that $\limsup_{r\rightarrow\infty}\frac{\log N(r,f)}{r}=0,$  we have \begin{equation}\label{E9.0}N(r, f(z+c))=N(r, f)+o(N(r,f))\end{equation} for $r\not\in E.$  Below, we get a more  explicit relationship between $N(r,f(z+c))$ and  $N(r,f)$ for finite convergence exponent of poles (and thus true also for finite order). This is an extension of \cite[Theorem 2.2]{chiang-feng} from one variable to several variables.\par

\begin{theorem}\label{T6} Let the convergence exponent of poles of a meromorphic function $f$ on $\mathbb{C}^{n}$ be finite, i.e.,
\begin{eqnarray*}\lambda(\frac{1}{f}):=\limsup_{r\rightarrow\infty}\frac{\log N(r, f)}{\log r}<\infty,\end{eqnarray*} then  for any $c\in\mathbb{C}^{n}\setminus\{0\},$
\begin{equation*} N(r, f(z+c))=N(r, f)+O(r^{\lambda(\frac{1}{f})-1+\varepsilon})\end{equation*} holds for any $\varepsilon>0.$ The $\lambda(\frac{1}{f})$ can be changed by $\rho(f)$ whenever $f$ is of finite order.
 \end{theorem}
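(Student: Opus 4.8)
The plan is to control $N(r, f(z+c))$ from both sides by shifting the radius of integration. The key observation is purely geometric: by the definition of the counting function, the poles of $f(z+c)$ in the ball $B_n(r)$ correspond to poles of $f$ in a translated region, which is contained in $B_n(r+|c|)$ and contains $B_n(r-|c|)$. Hence the two-sided sandwich
\begin{equation*}
N(r-|c|, f) \le N(r, f(z+c)) \le N(r+|c|, f)
\end{equation*}
holds for all sufficiently large $r$. The entire theorem then reduces to estimating the difference $N(r+|c|, f) - N(r, f)$ (and likewise $N(r, f) - N(r-|c|, f)$) under the hypothesis that $\lambda(1/f) < \infty$.

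First I would make the reduction above precise, then replace the interval $[r, r+|c|]$ by a single Taylor/mean-value step. Since $N(r, f) = \int_1^r \frac{n(t, f)}{t}\,dt$ is differentiable in $r$ with derivative $n(r,f)/r$, the mean value theorem gives
\begin{equation*}
N(r+|c|, f) - N(r, f) = \int_r^{r+|c|} \frac{n(t, f)}{t}\,dt \le |c|\,\frac{n(r+|c|, f)}{r}.
\end{equation*}
So everything comes down to bounding $n(r, f)$ by a power of $r$. The hypothesis $\lambda(1/f) < \infty$ says $\log N(r, f) \le (\lambda(1/f) + \varepsilon')\log r$ for large $r$, i.e. $N(r,f) = O(r^{\lambda(1/f)+\varepsilon'})$. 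To pass from $N$ back to $n$, I would use the standard inequality $n(r, f)\log 2 \le N(2r, f)$ (coming from $\int_r^{2r} \frac{n(t,f)}{t}\,dt \ge n(r,f)\log 2$), which yields $n(r, f) = O(r^{\lambda(1/f)+\varepsilon'})$ after adjusting $\varepsilon'$. Substituting this into the mean value estimate gives $N(r+|c|, f) - N(r, f) = O(r^{\lambda(1/f)-1+\varepsilon})$, and the symmetric argument on $[r-|c|, r]$ controls the lower side, so both inequalities combine to the claimed $N(r, f(z+c)) = N(r, f) + O(r^{\lambda(1/f)-1+\varepsilon})$.

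The main obstacle I anticipate is the geometric sandwich in the several-variables setting: here $n(t, \frac{1}{f-a})$ is defined via integration of the zero divisor against $(dd^c\|z\|^2)^{m-1}$ over $\mathrm{supp}\,\nu \cap B_m(t)$ rather than a naive point count, so I must verify that translating the integration ball $B_n(r)$ by $c$ is still squeezed between $B_n(r-|c|)$ and $B_n(r+|c|)$ and that this monotonicity survives the integration against the closed positive form. This should follow from the translation invariance of the divisor of $f(z+c)$ together with the monotonicity $n(s, \cdot) \le n(t, \cdot)$ for $s \le t$, but it is the one place where the multivariable definitions must be handled with care rather than treated as a one-variable point count. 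The final sentence of the statement, replacing $\lambda(1/f)$ by $\rho(f)$ when $f$ has finite order, is then immediate since $\lambda(1/f) \le \rho(f)$ always holds via the first main theorem $N(r,f) \le T(r,f)$, so the same estimates apply verbatim with $\rho(f)$ in place of $\lambda(1/f)$.
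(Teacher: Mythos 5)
Your quantitative engine is sound and is, in substance, the same computation the paper relies on: the mean-value step $N(r+\|c\|,f)-N(r,f)=\int_{r}^{r+\|c\|}n(t,f)\,\frac{dt}{t}\le \|c\|\,\frac{n(r+\|c\|,f)}{r}$ combined with $n(t,f)\log 2\le N(2t,f)=O\bigl(t^{\lambda(\frac{1}{f})+\varepsilon'}\bigr)$ is exactly the Riemann--Stieltjes argument of Chiang--Feng that the paper outsources by citation to Zheng--Korhonen, so reproving it by hand is fine. The genuine gap is your ``purely geometric sandwich'' $N(r-\|c\|,f)\le N(r,f(z+c))\le N(r+\|c\|,f)$. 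It is correct for the \emph{unintegrated} counting function, $n(t-\|c\|,f)\le n(t,f(z+c))\le n(t+\|c\|,f)$ --- and your several-variables worry is the harmless part, since $(dd^{c}\|z\|^{2})^{m-1}$ has constant coefficients and is translation invariant --- but it does \emph{not} pass to $N$, because the weight $\frac{dt}{t}$ is not translation invariant: substituting $s=t\pm\|c\|$ turns it into $\frac{ds}{s\mp\|c\|}$, which points the wrong way. Both halves are in fact false. With the paper's normalization $N(r,\cdot)=\int_{1}^{r}n(t,\cdot)\frac{dt}{t}$, take $m=1$ and $c=2$: for $f(z)=\frac{1}{z-2}$ one gets $N(r,f(z+c))=\log r>\log\frac{r+2}{2}=N(r+2,f)$ for all $r>2$, while for $f(z)=\frac{1}{z}$ one gets $N(r-2,f)=\log(r-2)>\log\frac{r}{2}=N(r,f(z+c))$ for all $r>4$. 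What survives is the sandwich up to the weight-discrepancy error
\begin{equation*}
\int_{1+\|c\|}^{r+\|c\|} n(s,f)\,\frac{\|c\|}{s\left(s-\|c\|\right)}\,ds
=O\bigl(r^{\lambda(\frac{1}{f})-1+\varepsilon}\bigr)+O(\log r)
\end{equation*}
(and its analogue with $s+\|c\|$ on the lower side), which must be estimated explicitly rather than asserted away. Note the one-pole examples also show that for $\lambda(\frac{1}{f})<1$ an $O(1)$ (in general $O(\log r)$) term cannot be dropped; this caveat appears in Chiang--Feng's original Theorem 2.2 and silently afflicts the statement here as well, so your argument, once repaired, proves the result in the same ``up to $O(\log r)$'' sense in which it actually holds.

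For comparison, the paper never uses a lower geometric bound at all: it takes the one-sided transfer $N(r,f(z+c))\le N(r+\|c\|,f)$ (asserted as obvious, and carrying the same hidden discrepancy) together with the shift estimate for $N(r+\|c\|,f)-N(r,f)$, and then obtains the reverse inequality by applying the upper estimate to $g(z)=f(z+c)$ with shift $-c$, invoking \eqref{E9.0} to get $\lambda\bigl(\frac{1}{f(z+c)}\bigr)=\lambda(\frac{1}{f})$ so that the error exponent is unchanged. To repair your proof you can either carry the discrepancy integral through both halves of your sandwich, or keep only your upper estimate and mirror the paper's back-shift trick; your final remark, that the finite-order case follows from $\lambda(\frac{1}{f})\le\rho(f)$ via $N(r,f)\le T(r,f)$, is correct and agrees with the paper.
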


\begin{proof}Set $\|c\|=\sqrt{|c_{1}|^{2}+\ldots+|c_{m}|^{2}}.$ Sine $\lambda(\frac{1}{f})<\infty,$ it is enough to take the same method due to Zheng and Korhonen \cite[Pages 15-16]{zheng-korhonen-2018} to obtain that \begin{equation*} N(r+\|c\|, f)=N(r, f)+O(r^{\lambda(\frac{1}{f})-1+\varepsilon})\end{equation*} holds for any $\varepsilon>0.$ The original proof is owing to Chiang and Feng \cite[Theorem 2.2]{chiang-feng} by the definition of Riemann-Stieltjes integral  for counting functions (in fact, they proved this lemma for meromorphic functions of one variables). On the other hand, it is obvious that $$N(r, f(z+c))\leq N(r+\|c\|, f)$$ by the definition of counting function. Hence, we get that \begin{equation*} N(r, f+c)\leq N(r, f)+O(r^{\lambda(\frac{1}{f})-1+\varepsilon})\end{equation*} and thus \begin{equation*} N(r, f)\leq N(r, f+c)+O(r^{\lambda(\frac{1}{f+c})-1+\varepsilon})\end{equation*} holds for any $\varepsilon>0.$ The assumption $\lambda(\frac{1}{f})<\infty$ implies that we can get from \eqref{E9.0} that $\lambda(\frac{1}{f})=\lambda(\frac{1}{f(z+c)}).$ Therefore,
\begin{equation*} N(r, f+c)=N(r, f+c)+O(r^{\lambda(\frac{1}{f})-1+\varepsilon})\end{equation*} holds for any $\varepsilon>0.$ Obviously, the $\lambda(\frac{1}{f})$ can be changed by the order of $f$ from the above discussion whenever $f$ is of finite order.
\end{proof}

Finally in this section, we give the  explicit relation $T(r,f(z+c))\sim T(r,f)$ for a meromorphic function with finite order. This is an extension of \cite[Theorem 2.1]{chiang-feng}.\par

\begin{theorem} \label{T10}If  a meromorphic function $f$ on $\mathbb{C}^{n}$ is of finite order, then  \begin{equation*}T(r, f(z+c))=T(r, f)+O(r^{\rho(f)-1+\varepsilon})\end{equation*} for any $c\in\mathbb{C}^{n}\setminus\{0\}$ and for any $\varepsilon>0.$\end{theorem}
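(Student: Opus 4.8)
The plan is to split the characteristic function as $T(r, f(z+c)) = m(r, f(z+c)) + N(r, f(z+c))$ and to estimate the proximity part and the counting part separately, feeding each into one of the two finite-order results already in hand. The proximity part will be controlled by the logarithmic difference lemma for finite order (Theorem \ref{T1'}), and the counting part by the comparison of $N(r, f(z+c))$ with $N(r,f)$ from Theorem \ref{T6}.

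First I would bound the proximity term by peeling off the logarithmic difference quotient,
\begin{equation*}
m(r, f(z+c)) \leq m\left(r, \frac{f(z+c)}{f(z)}\right) + m(r, f),
\end{equation*}
so that Theorem \ref{T1'} gives $m\left(r, \frac{f(z+c)}{f(z)}\right) = O\left(r^{\rho(f)-1+\varepsilon}\right)$ for every $\varepsilon>0$. For the counting term, Theorem \ref{T6} (using that $\lambda(\tfrac{1}{f})$ may be replaced by $\rho(f)$ since $f$ has finite order) yields $N(r, f(z+c)) = N(r,f) + O\left(r^{\rho(f)-1+\varepsilon}\right)$. Adding the two estimates gives the upper bound
\begin{equation*}
T(r, f(z+c)) \leq m(r,f) + N(r,f) + O\left(r^{\rho(f)-1+\varepsilon}\right) = T(r,f) + O\left(r^{\rho(f)-1+\varepsilon}\right).
\end{equation*}

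For the reverse inequality I would use the symmetry $f(z) = f((z+c)-c)$. Setting $g(z) = f(z+c)$ and applying the upper bound just obtained to $g$ with the shift $-c$ produces $T(r, f) = T(r, g(z-c)) \leq T(r, g) + O\left(r^{\rho(g)-1+\varepsilon}\right)$. The essential point is that the order is shift-invariant, $\rho(g) = \rho(f(z+c)) = \rho(f)$, which is furnished by Theorem \ref{T5} (equivalently, is implicit in the proof of Theorem \ref{T1'}); hence the error exponent is again $\rho(f)-1+\varepsilon$, and $T(r,f) \leq T(r, f(z+c)) + O\left(r^{\rho(f)-1+\varepsilon}\right)$. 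Combining the two inequalities delivers the two-sided estimate claimed.

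The argument is essentially a bookkeeping assembly of Theorems \ref{T1'}, \ref{T5}, and \ref{T6}, so no serious obstacle arises; the only point requiring care is that the two error exponents coming from the separate estimates both collapse to $r^{\rho(f)-1+\varepsilon}$. The proximity estimate is natively governed by $\rho(f)$, while the counting estimate of Theorem \ref{T6} is a priori governed by $\lambda(\tfrac{1}{f})$; since $\lambda(\tfrac{1}{f}) \leq \rho(f)$ for a finite-order function, Theorem \ref{T6} lets us uniformly adopt the exponent $\rho(f)-1+\varepsilon$. The shift-invariance of the order, needed so that the reverse estimate carries the same exponent, is exactly what Theorem \ref{T5} guarantees.
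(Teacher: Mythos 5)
Your proposal is correct and follows essentially the same route as the paper: both split $T(r,f(z+c))$ into proximity and counting parts, bound the former via Theorem \ref{T1'} and the latter via Theorem \ref{T6} (with $\lambda(\tfrac{1}{f})$ replaced by $\rho(f)$), and obtain the reverse inequality by the shift $f(z)=f((z+c)-c)$ together with the shift-invariance of the order supplied by Theorem \ref{T5}. No substantive difference from the paper's argument.
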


\begin{proof}By Theorem \ref{T1'} and Theorem \ref{T6}, we have
\begin{eqnarray*}
T(r, f(z+c))&=&m(r, f(z+c))+N(r, f(z+c))\\\nonumber
&\leq& m(r, \frac{f(z+c)}{f(z)})+m(r,f)+N(r, f)+O(r^{\rho(f)-1+\varepsilon})\\\nonumber
&=&T(r, f)+O(r^{\rho(f)-1+\varepsilon}).
\end{eqnarray*}This implies
\begin{eqnarray*}
T(r, f)&\leq&T(r, f(z+c))+O(r^{\rho(f(z+c))-1+\varepsilon}).
\end{eqnarray*}Since $\rho(f)<\infty,$ it follows from Theorem \ref{T5} that $\rho(f(z+c))=\rho(f).$ Hence the theorem is proved.
\end{proof}

\section{Partial difference equations}\label{s3}

In this section, we will consider meromorphic solutions of partial difference equations by making use of our results on logarithmic difference lemma. Recall that a meromorphic function $g$ is said to be a small function with respect to another meromorphic function $f$ if $T(r, g)=o(T(r, f)).$ For examples, constant functions are small with respect to rational functions, and finite order meromorphic functions are small with respect to infinite order meromorphic functions.  A meromorphic solution $w$ on $\mathbb{C}^{n}$ of a partial difference equation (or even a general form of functional equation) is called admissible if all coefficients $\{a_{j}\}$ of the equation are small functions with respect to $w.$ \par

\subsection{Some nonlinear partial difference equations}\noindent

Let us start with the discrete KdV equation \cite{hirota} of the form $X^{i+1}_{j+1}=X_{j}^{i}+\frac{1}{X_{j+1}^{i}}-\frac{1}{X_{j}^{i+1}}.$ Since this form is not very convenient and thus its potential form $X^{i+1}_{j+1}=X_{j}^{i}+\frac{Z_{j}^{i}}{X_{j}^{i+1}-X_{j+1}^{i}}$ was studied (see \cite{tremblay-grammaticos-ramani}). Motivated by the discrete potential KdV equation,  we consider the partial difference equation as follows.\par

\begin{theorem}\label{T10'} Let $c_{1}, c_{2}\in\mathbb{C}\setminus\{0\}.$ Let $f$ be a nontrivial meromorphic solution of the partial difference equation
\begin{eqnarray}\label{E7.8}f(z_{1}+c_{1}, z_{2}+c_{2})=f(z_{1}, z_{2})+\frac{A(z_{1}, z_{2})}{f(z_{1}, z_{2}+c_{2})-f(z_{1}+c_{1}, z_{2})},\end{eqnarray} where $A(z_{1}, z_{2})$ is a nonzero meromorphic function on $\mathbb{C}^{2}$ small with respect to the solution $f,$ that is $T(r, A)=o(T(r,f)).$
If  $\delta_{f}(0)>0,$  then  $$\limsup_{r\rightarrow\infty}\frac{\log T(r, f)}{r}>0.$$\end{theorem}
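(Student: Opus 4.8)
The plan is to argue by contraposition: assuming the growth condition $\limsup_{r\to\infty}\frac{\log T(r,f)}{r}=0$ (that is, \eqref{E1}), I will show that $\delta_f(0)=0$, which contradicts $\delta_f(0)>0$ and thereby forces $\limsup_{r\to\infty}\frac{\log T(r,f)}{r}>0$. Since $f$ is a nontrivial, hence nonconstant, solution, Theorem \ref{T1} (the logarithmic difference lemma under \eqref{E1}) is available. First I would rewrite \eqref{E7.8} in product form. Writing $z=(z_1,z_2)$ and $c=(c_1,c_2)$, equation \eqref{E7.8} becomes
\[
\bigl(f(z+c)-f(z)\bigr)\bigl(f(z+(0,c_2))-f(z+(c_1,0))\bigr)=A(z),
\]
so that the two difference expressions appear symmetrically and their product equals the small function $A$.

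The decisive idea is to divide \emph{each} factor by the \emph{same} denominator $f(z)$, so that the product turns into $A/f^2$. Explicitly, I would form the identity
\[
\frac{A(z)}{f(z)^2}=\left(\frac{f(z+c)}{f(z)}-1\right)\left(\frac{f(z+(0,c_2))}{f(z)}-\frac{f(z+(c_1,0))}{f(z)}\right),
\]
in which every term on the right is a logarithmic difference quotient $\frac{f(z+\tilde c)}{f(z)}$ with $\tilde c\in\{c,\,(0,c_2),\,(c_1,0)\}\subset\mathbb{C}^2\setminus\{0\}$. Applying Theorem \ref{T1} to each such quotient, and using that a finite union of sets of zero upper density again has zero upper density, I obtain $m\!\left(r,\frac{A}{f^2}\right)=o(T(r,f))$ for all $r$ outside an exceptional set $E$ with $\overline{dens}\,E=0$. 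Since $A$ is a small function, $m(r,\tfrac1A)\le T(r,\tfrac1A)=T(r,A)+O(1)=o(T(r,f))$, and hence $m\!\left(r,\frac{1}{f^2}\right)\le m\!\left(r,\frac{A}{f^2}\right)+m\!\left(r,\frac1A\right)=o(T(r,f))$ off $E$.

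To finish, I would invoke the elementary identity $\log^{+}|g|^2=2\log^{+}|g|$, which gives $m(r,1/f^2)=2\,m(r,1/f)$ exactly, and therefore $m(r,1/f)=o(T(r,f))$ for $r\notin E$. By the first main theorem $N(r,1/f)=T(r,f)-m(r,1/f)+O(1)$, so along any sequence $r_k\to\infty$ with $r_k\notin E$ (such a sequence exists because $E$ has zero upper density) one has $N(r_k,1/f)/T(r_k,f)\to1$. Consequently $\limsup_{r\to\infty}N(r,1/f)/T(r,f)=1$, that is $\delta_f(0)=0$, which is the sought contradiction and completes the contrapositive.

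I expect the only genuinely non-routine step to be the choice of grouping in the second paragraph: placing both factors over the common denominator $f(z)$ is what converts the product into $A/f^2$ and lets $m(r,1/f)$ be extracted through $m(r,1/f^2)=2\,m(r,1/f)$. A more naive grouping, for instance dividing the two factors by the different shifts $f(z)$ and $f(z+(c_1,0))$, would only control $m\bigl(r,\tfrac{1}{f(z)f(z+(c_1,0))}\bigr)$, and the individual term $m(r,1/f)$ cannot be separated from it without reintroducing a proximity function of size comparable to $T(r,f)$. Everything else reduces to Theorem \ref{T1}, the smallness hypothesis $T(r,A)=o(T(r,f))$, and the first main theorem.
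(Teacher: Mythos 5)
Your proposal is correct and takes essentially the same route as the paper's own proof: the same contrapositive setup, the identical key decomposition $\frac{1}{f^{2}}=\frac{1}{A}\left(\frac{f(z+c)}{f}-1\right)\left(\frac{f(z+(0,c_{2}))}{f}-\frac{f(z+(c_{1},0))}{f}\right)$ over the common denominator $f(z)$, and the same application of Theorem \ref{T1} plus the smallness of $A$ to get $m\left(r,\frac{1}{f^{2}}\right)=o(T(r,f))$ off a set of zero upper density. The only (cosmetic) difference is the endgame: you finish with the exact identity $m\left(r,\frac{1}{f^{2}}\right)=2m\left(r,\frac{1}{f}\right)$ and the first main theorem to conclude $\delta_{f}(0)=0$ directly, whereas the paper compares $2N\left(r,\frac{1}{f}\right)$ against $T(r,f^{2})=2T(r,f)+o(T(r,f))$ via the several-variables Valiron--Mohon'ko theorem --- your variant is marginally more elementary but mathematically equivalent.
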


\begin{proof} Assume that a nontrivial meromorphic solution $f$ satisfies the condition of $\limsup_{r\rightarrow\infty}\frac{\log T(r, f)}{r}=0.$ Since $A$ is a nonzero meromorphic function small with respect to $f,$ we get from the first main theorem that $$m(r, \frac{1}{A})\leq T(r, \frac{1}{A})=T(r, A)+O(1)=o(T(r, f)).$$ It follows from the equation \eqref{E7.8} that
\begin{eqnarray*}\frac{1}{f^{2}(z_{1}, z_{2})}=\frac{1}{A(z_{1}, z_{2})}\left(\frac{f(z_{1}+c_{1}, z_{2}+c_{2})}{f(z_{1}, z_{2})}-1\right)
\left(\frac{f(z_{1}, z_{2}+c_{2})}{f(z_{1}, z_{2})}-\frac{f(z_{1}+c_{1}, z_{2})}{f(z_{1}, z_{2})}\right),\end{eqnarray*}
 and thus, \begin{eqnarray*}m(r, \frac{1}{f^{2}(z_{1}, z_{2})})&\leq& m\left(r,\frac{f(z_{1}+c_{1}, z_{2}+c_{2})}{f(z_{1}, z_{2})}\right)+m\left(r, \frac{f(z_{1}, z_{2}+c_{2})}{f(z_{1}, z_{2})}\right)\\&&+m\left(r, \frac{f(z_{1}+c_{1}, z_{2})}{f(z_{1}, z_{2})}\right)+m(r, \frac{1}{A(z_{1}, z_{2})})+O(1)\\
 &\leq& m\left(r,\frac{f(z_{1}+c_{1}, z_{2}+c_{2})}{f(z_{1}, z_{2})}\right)+m\left(r, \frac{f(z_{1}, z_{2}+c_{2})}{f(z_{1}, z_{2})}\right)\\&&+m\left(r, \frac{f(z_{1}+c_{1}, z_{2})}{f(z_{1}, z_{2})}\right)+o(T(r,f)).\end{eqnarray*}
By Theorem \ref{T1}, one can deduce that  \begin{eqnarray*}&&m\left(r,\frac{f(z_{1}+c_{1}, z_{2}+c_{2})}{f(z_{1}, z_{2})}\right)+m\left(r, \frac{f(z_{1}+c_{1}, z_{2})}{f(z_{1}, z_{2})}\right)+m\left(r, \frac{f(z_{1}, z_{2}+c_{2})}{f(z_{1}, z_{2})}\right)\\&=&o(T(r, f))
\end{eqnarray*}hold for $r\not\in E$ where $E$ is a set with $\overline{dens}E=0.$ Hence,
\begin{eqnarray*} T(r, f^{2})&=&T(r, \frac{1}{f^{2}})+O(1)=N(r, \frac{1}{f^{2}})+m(r, \frac{1}{f^{2}})+O(1)\\
&\leq& 2N(r, \frac{1}{f})+o(T(r, f)) \end{eqnarray*}  holds for all $r\not\in E$ where $E$ is a set with $\overline{dens}E=0.$  Since $\delta_{f}(0)>0,$ we have
$$N(r, \frac{1}{f})<(1-\frac{\delta_{f}(0)}{2})T(r, f).$$ This gives
\begin{eqnarray*} T(r, f^{2})\leq 2(1-\frac{\delta_{f}(0)}{2})T(r, f)+o(T(r, f)) \end{eqnarray*}  holds for all $r\not\in E$ where $E$ is a set with $\overline{dens}E=0.$ By the Valion-Mohon'ko theorem in several complex variables \cite[Theorem 3.4]{hu-yang-1}, we get $$T(r, f^{2})=2 T(r, f)+o(T(r, f)).$$ Therefore, it follows that
$$\delta_{f}(0)T(r,f)\leq o(T(r,f))$$
for all $r\not\in E$ where $E$ is a set with $\overline{dens}E=0.$ This is a contradiction.
\end{proof}

\begin{example}
Let $c_{1}\in \mathbb{C}\setminus\{0\},$ $c_{2}=2\pi i.$ Then the transcendental meromorphic function $f(z_{1}, z_{2})=\frac{z_{1}^{2}}{z_{2}}+e^{z_{2}}$ is a solution of the partial difference equation \eqref{E7.8} with rational coefficient $A(z_{1}, z_{2})=\frac{-(z_{1}^{2}+2c_{1}z_{1})(2c_{1}z_{1}z_{2}+c_{1}^{2}z_{2}-2\pi iz_{1}^{2})}{z_{2}^{2}(z_{2}+2\pi i)}.$
One can deduce that $\delta_{f}(0)=0,$ $\rho(f)=1$ and $\limsup_{r\rightarrow\infty}\frac{\log T(r, f)}{r}=0.$ This means that the assumption $\delta_{f}(0)>0$ in Theorem \ref{T10'} is necessary.
\end{example}

\begin{example}
Denote $\wp(z_{2})$ by the Weierstrass $\wp$-function (an elliptic function) of one variable $z_{2}$ with two period $w_{1}$ and $w_{2}$ such that $\frac{w_{1}}{w_{2}}\not\in \mathbb{R}$ defined as $$\wp(z_{2})=\frac{1}{z_{2}}+\sum_{\mu,\nu; \mu^{2}+\nu^{2}\neq 0}\left\{\frac{1}{(z_{2}+\mu w_{1}+\nu w_{2})^{2}}-\frac{1}{(\mu w_{1}+\nu w_{2})^{2}}\right\},$$ which is even and satisfies the differential equation $(\wp(z_{2})^{'})^{2}=4\wp(z_{2})^{3}-1$ after appropriately choosing $w_{1}$ and $w_{2}.$ It was proved by Bank and Langley \cite[Corollary 2]{bank-langley} that $T(r, \wp)=O(r^{2})$ and $m(r, \wp)=o(r^{2})=o(T(r, \wp)).$ Then the meromorphic function $f(z_{1}, z_{2})=2 z_{1}+\wp(z_{2})$ is a solution of the partial difference equation $$f(z_{1}+w_{1}, z_{2}+w_{2})=f(z_{1}, z_{2})+\frac{4w_{1}^{2}}{f(z_{1}, z_{2}+w_{2})-f(z_{1}+w_{1}, z_{2})}.$$ Obviously, we have $\rho(f)=2$ and thus $\limsup_{r\rightarrow\infty}\frac{\log T(r, f)}{r}=0.$ It seems not easy to compute $N(r, \frac{1}{2 z_{1}+ \wp(z_{2})}),$ however, it is interesting that by Theorem \ref{T10'}, we get immediately $\delta_{f}(0)=0,$ and thus $N(r, \frac{1}{2 z_{1}+ \wp(z_{2})})=O(r^{2}).$ Of course, if one can compute directly $N(r, \frac{1}{2 z_{1}+ \wp(z_{2})})=O(r^{2}),$ then this implies that the condition $\delta_{f}(0)>0$ in Theorem \ref{T10'} is necessary.
\end{example}

\begin{question} Observe that any nonzero meromorphic function with two periods $(c_{1}, 0)$ and $(0, c_{2})$ must satisfy the discrete KdV partial difference equation
\begin{equation}\label{EQ}f(z_{1}+c_{1}, z_{2}+c_{2})=f(z_{1}, z_{2})+\frac{1}{f(z_{1}+c_{1}, z_{2})}-\frac{1}{f(z_{1}, z_{2}+c_{2})}.\end{equation} For example, $f(z_{1}, z_{2})=\sin z_{1}-\wp(z_{2})$ satisfies the equation \eqref{EQ} with $c_{1}=2k\pi$ and $c_{2}=w_{j}$ $(j\in\{1, 2\}).$ Thus it is interesting to ask whether all meromorphic solutions of the discrete KdV partial difference equation \eqref{EQ} must be period or not?
\end{question}

Recall that Gross \cite{gross} and Hayman \cite{hayman-Fermat} investigated meromorphic solutions of the Fermat functional equations $f^{m}+g^{m}=1$ and $f^{m}+g^{m}+h^{m}=1$ of one variable, respectively. Motivated by this, we get an interesting result on solutions of  nonlinear Fermat type partial difference equations as follows.

\begin{theorem}\label{TFE} Let $c_{1}, c_{2}\in\mathbb{C}\setminus\{0\}.$ Suppose that $f$ is a nontrivial meromorphic solution of the Fermat type partial difference equations
\begin{eqnarray}\label{E99}\frac{1}{f^{m}(z_{1}+c_{1}, z_{2}+c_{2})}+\frac{1}{f^{m}(z_{1}, z_{2})}=A(z_{1}, z_{2})f^{n}(z_{1}, z_{2}),\end{eqnarray} or
\begin{eqnarray}\label{E100}&&\\\nonumber&&\frac{1}{f^{m}(z_{1}+c_{1}, z_{2}+c_{2})}+\frac{1}{f^{m}(z_{1}+c_{1}, z_{2})}+\frac{1}{f^{m}(z_{1}, z_{2}+c_{2})}=A(z_{1}, z_{2})f^{n}(z_{1}, z_{2}),\end{eqnarray} where $m\in\mathbb{N},$  $n\in\mathbb{N}\cup\{0\},$ and $A(z_{1}, z_{2})$ is a nonzero meromorphic  function on $\mathbb{C}^{2}$ with respect to the solution $f,$ that is $T(r, A)=o(T(r, f)).$ If  $\delta_{f}(\infty)>0,$  then  $$\limsup_{r\rightarrow\infty}\frac{\log T(r, f)}{r}>0.$$ \end{theorem}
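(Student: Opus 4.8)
The plan is to argue by contradiction, in exactly the spirit of the proof of Theorem \ref{T10'}. Suppose a nontrivial meromorphic solution $f$ satisfied $\limsup_{r\rightarrow\infty}\frac{\log T(r,f)}{r}=0$. Then the logarithmic difference lemma (Theorem \ref{T1}) applies to $f$ for every fixed shift $c\in\mathbb{C}^{2}\setminus\{0\}$, in both directions. The aim is to show that this growth hypothesis forces the proximity function $m(r,f)$ to be negligible compared with $T(r,f)$, which then collides with the positivity of the deficiency $\delta_{f}(\infty)$.

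First I would clear denominators so that the unknown appears as a single power matched against small functions and pure difference quotients. For equation \eqref{E99}, multiplying by $f^{m}(z_{1},z_{2})$ gives
\begin{equation*}
A(z_{1},z_{2})\,f^{m+n}(z_{1},z_{2})=1+\left(\frac{f(z_{1},z_{2})}{f(z_{1}+c_{1},z_{2}+c_{2})}\right)^{m},
\end{equation*}
while for equation \eqref{E100} the same multiplication yields
\begin{equation*}
A(z_{1},z_{2})\,f^{m+n}(z_{1},z_{2})=\left(\frac{f(z_{1},z_{2})}{f(z_{1}+c_{1},z_{2}+c_{2})}\right)^{m}+\left(\frac{f(z_{1},z_{2})}{f(z_{1}+c_{1},z_{2})}\right)^{m}+\left(\frac{f(z_{1},z_{2})}{f(z_{1},z_{2}+c_{2})}\right)^{m}.
\end{equation*}
In the second case the three relevant shifts $(c_{1},c_{2})$, $(c_{1},0)$, $(0,c_{2})$ are all nonzero, so Theorem \ref{T1} applies to each quotient.

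Next I would pass to proximity functions. Solving for $f^{m+n}$ and using $m(r,g^{m})=m\,m(r,g)$ together with $m(r,\frac{1}{A})\leq T(r,\frac{1}{A})=T(r,A)+O(1)=o(T(r,f))$, each quotient $m\bigl(r,\frac{f(z)}{f(z+c)}\bigr)$ is $o(T(r,f))$ for $r\not\in E$ with $\overline{dens}\,E=0$ by Theorem \ref{T1}. Hence in either case $m(r,f^{m+n})=o(T(r,f))$, and since $m(r,f^{m+n})=(m+n)\,m(r,f)$ this forces $m(r,f)=o(T(r,f))$ off $E$. Finally, because $\delta_{f}(\infty)>0$, the definition of the deficiency yields $N(r,f)\leq\left(1-\frac{\delta_{f}(\infty)}{2}\right)T(r,f)$ for all sufficiently large $r$; combining with $T(r,f)=m(r,f)+N(r,f)$ gives, along the unbounded set of $r\not\in E$, the inequality $T(r,f)\leq\left(1-\frac{\delta_{f}(\infty)}{2}\right)T(r,f)+o(T(r,f))$, that is $\frac{\delta_{f}(\infty)}{2}T(r,f)\leq o(T(r,f))$, the desired contradiction.

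The one subtlety — rather than a genuine obstacle — is the bookkeeping of the exceptional set $E$: the logarithmic difference estimate holds only off a set of zero upper density, whereas the deficiency bound holds for all large $r$, so the contradiction must be read off along a sequence $r\rightarrow\infty$ avoiding $E$, which exists precisely because $\overline{dens}\,E=0<1$. The substantive step is the algebraic one: clearing $f^{m}$ so that the nonlinearity collapses to a single power $f^{m+n}$ set against $\frac{1}{A}$ and difference quotients is exactly what makes the problem fall directly to Theorem \ref{T1}.
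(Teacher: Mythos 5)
Your proposal is correct and follows essentially the same route as the paper's proof: clear denominators to isolate $f^{m+n}$ against $\frac{1}{A}$ and pure difference quotients, apply Theorem \ref{T1} to those quotients off a set $E$ with $\overline{dens}\,E=0$, and contradict $\delta_{f}(\infty)>0$. The only difference is cosmetic: the paper invokes the Valiron--Mohon'ko theorem to write $T(r,f^{m+n})=(m+n)T(r,f)+o(T(r,f))$ and then bounds $N(r,f^{m+n})$ via the deficiency, whereas you use the elementary identity $m(r,f^{m+n})=(m+n)\,m(r,f)$ to conclude $m(r,f)=o(T(r,f))$ directly, which bypasses Valiron--Mohon'ko but yields the same contradiction.
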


\begin{proof} Since $A$ is a small function with respect to $f,$ we get $$m(r, \frac{1}{A})\leq T(r, \frac{1}{A})=T(r, A)+O(1)=o(T(r, f)).$$
Since $\delta_{f}(\infty)>0,$ we have
$$N(r, f)<(1-\frac{\delta_{f}(\infty)}{2})T(r, f).$$
By the Valion-Mohon'ko theorem in several complex variables \cite[Theorem 3.4]{hu-yang-1}, we have $$T(r, f^{m+n})=(m+n) T(r, f)+o(T(r, f)).$$
Furthermore, by Theorem \ref{T1}, we have \begin{eqnarray*}&&m\left(r,\frac{f(z_{1}, z_{2})}{f(z_{1}+c_{1}, z_{2}+c_{2})}\right)+m\left(r, \frac{f(z_{1}, z_{2})}{f(z_{1}+c_{1}, z_{2})}\right)+m\left(r, \frac{f(z_{1}, z_{2})}{f(z_{1}, z_{2}+c_{2})}\right)\\&=&o(T(r, f))
\end{eqnarray*}hold for $r\not\in E$ where $E$ is a set with $\overline{dens}E=0.$\par

It follows from the Fermat type partial difference equation \eqref{E100} that
\begin{eqnarray*}&&f^{n+m}(z_{1}, z_{2})\\&=&\frac{1}{A(z_{1}, z_{2})}\left[\left(\frac{f(z_{1}, z_{2})}{f(z_{1}+c_{1}, z_{2}+c_{2})}\right)^{m}+
\left(\frac{f(z_{1}, z_{2})}{f(z_{1}, z_{2}+c_{2})}\right)^{m}+\left(\frac{f(z_{1}, z_{2})}{f(z_{1}+c_{1}, z_{2})}\right)^{m}\right].\end{eqnarray*} Therefore, we have
 \begin{eqnarray*}&&(m+n)T(r, f)\\&=&T(r, f^{m+n})+o(T(r, f))\\&=&N(r, f^{m+n})+m(r, f^{m+n})+o(T(r, f))\\&
 =&N(r, f^{m+n})+m(r, \frac{1}{A})+m\cdot m\left(r,\frac{f(z_{1}+c_{1}, z_{2}+c_{2})}{f(z_{1}, z_{2})}\right)\\
 &&+m\cdot m\left(r, \frac{f(z_{1}, z_{2}+c_{2})}{f(z_{1}, z_{2})}\right)+m\cdot m\left(r, \frac{f(z_{1}+c_{1}, z_{2})}{f(z_{1}, z_{2})}\right)+o(T(r, f))\\
 &\leq&(m+n)(1-\frac{\delta_{f}(\infty)}{2})T(r, f)+o(T(r,f)).\end{eqnarray*} We obtain a contradiction. Similarly discussion to the equation \eqref{E99} also gives a contradiction. \end{proof}

\begin{example} \cite[Example 1.5]{xu-cao} Let $c_{1}$ and $c_{2}$ be two complex values such that $c_{1}+2ic_{2}=-\frac{\pi}{2}+2k\pi$ $(k\in\mathbb{Z}).$ Then the meromorphic function $f(z_{1}, z_{2})=\frac{1}{\cos(z_{1}, 2i z_{2})}$ is a solution of the Fermat type partial difference equation $\frac{1}{f^{2}(z_{1}+c_{1}, z_{2}+c_{2})}+\frac{1}{f^{2}(z_{1}, z_{2})}=1.$ Obviously,  $\delta_{f}(\infty)=0,$ $\rho(f)=1$ and $\limsup_{r\rightarrow\infty}\frac{\log T(r, f)}{r}=0.$ This shows that the assumption  $\delta_{f}(\infty)>0$ in Theorem \ref{TFE} is necessary. \end{example}

Since $\delta_{f}(\infty)>0$ is always true for any nonconstant entire function, Theorem \ref{TFE} tells us that the equations \eqref{E99} and \eqref{E100} do not have any admissible entire solutions of $\limsup_{r\rightarrow\infty}\frac{\log T(r, f)}{r}=0.$ Furthermore, if take $g(z_{1}, z_{2})=\frac{1}{f(z_{1}, z_{2})},$ then a corollary is obtained immediately by Theorem \ref{TFE}.\par

\begin{corollary} Let $c_{1}, c_{2}\in\mathbb{C}\setminus\{0\}.$ Suppose that $g$ is a nontrivial meromorphic solution of the Fermat type partial difference equations
\begin{eqnarray*}g^{m}(z_{1}+c_{1}, z_{2}+c_{2})+g^{m}(z_{1}, z_{2})=\frac{A(z_{1}, z_{2})}{g^{n}(z_{1}, z_{2})},\end{eqnarray*} or
\begin{eqnarray*}\nonumber&&g^{m}(z_{1}+c_{1}, z_{2}+c_{2})+g^{m}(z_{1}+c_{1}, z_{2})+g^{m}(z_{1}, z_{2}+c_{2})=\frac{A(z_{1}, z_{2})}{g^{n}(z_{1}, z_{2})},\end{eqnarray*} where $m\in\mathbb{N},$  $n\in\mathbb{N}\cup\{0\},$ and $A(z_{1}, z_{2})$ is a nonzero meromorphic function on $\mathbb{C}^{2}$ with respect to the solution $g,$ that is $T(r, A)=o(T(r, g)).$ If  $\delta_{g}(0)>0,$  then  $\limsup_{r\rightarrow\infty}\frac{\log T(r, g)}{r}>0.$ \end{corollary}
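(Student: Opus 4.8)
The plan is to reduce the corollary directly to Theorem \ref{TFE} by means of the reciprocal substitution $f = 1/g$, exactly as suggested in the paragraph preceding the statement. First I would set $f := 1/g$, so that $g^{m} = 1/f^{m}$ and $g^{n} = 1/f^{n}$ at every shifted argument. Substituting into the first displayed equation for $g$ transforms it into
\begin{eqnarray*}
\frac{1}{f^{m}(z_{1}+c_{1}, z_{2}+c_{2})} + \frac{1}{f^{m}(z_{1}, z_{2})} = A(z_{1}, z_{2}) f^{n}(z_{1}, z_{2}),
\end{eqnarray*}
which is precisely equation \eqref{E99}; the second displayed equation for $g$ is carried in the same way to \eqref{E100}. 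Thus $f$ is a nontrivial meromorphic solution of the corresponding Fermat type partial difference equation treated in Theorem \ref{TFE}.

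Next I would check that the hypotheses transfer under this substitution. Since $f = 1/g$, the first main theorem gives $T(r, f) = T(r, g) + O(1)$, so $A$ being small with respect to $g$ is equivalent to $A$ being small with respect to $f$, and moreover
\begin{eqnarray*}
\limsup_{r\rightarrow\infty} \frac{\log T(r, f)}{r} = \limsup_{r\rightarrow\infty} \frac{\log T(r, g)}{r}.
\end{eqnarray*}
For the defect, the zeros of $g$ are exactly the poles of $f$, counted with the same multiplicities, so that $N(r, \frac{1}{g}) = N(r, f)$, whence
\begin{eqnarray*}
\delta_{f}(\infty) = 1 - \limsup_{r\rightarrow\infty} \frac{N(r, f)}{T(r, f)} = 1 - \limsup_{r\rightarrow\infty} \frac{N(r, \frac{1}{g})}{T(r, g)} = \delta_{g}(0).
\end{eqnarray*}
Hence the assumption $\delta_{g}(0) > 0$ is the same as $\delta_{f}(\infty) > 0$.

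Finally, applying Theorem \ref{TFE} to $f$ yields $\limsup_{r\rightarrow\infty} \frac{\log T(r, f)}{r} > 0$, and by the identity of the two growth quantities recorded above this is exactly $\limsup_{r\rightarrow\infty} \frac{\log T(r, g)}{r} > 0$, which completes the argument. Since every step is a routine translation through the reciprocal map, I do not anticipate any genuine obstacle; the only point that deserves a line of care is the verification that the substitution $f = 1/g$ preserves both the smallness of $A$ and the defect relation $\delta_{g}(0) = \delta_{f}(\infty)$, and both of these follow at once from the invariance $T(r, 1/g) = T(r, g) + O(1)$ together with the correspondence between the zeros of $g$ and the poles of $f$.
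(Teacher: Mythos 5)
Your proposal is correct and follows exactly the route the paper intends: the paper derives this corollary from Theorem \ref{TFE} via the substitution $g = 1/f$, and your write-up simply makes explicit the routine verifications (invariance of $T(r,\cdot)$ under reciprocals by the first main theorem, the identity $\delta_{f}(\infty)=\delta_{g}(0)$, and the transformation of the equations into \eqref{E99} and \eqref{E100}) that the paper leaves implicit with the word ``immediately.''
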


\subsection{Linear partial difference equations}\noindent

Next, owing to many models of partial difference equations such as discrete heat equation, discrete Laplace equation, nonsymmetric partial difference functional equation, and discrete Poisson equation and others, we consider general partial linear difference equations, and obtain the following results. The first theorem extend and generalize some previous results of complex difference equations in one variable (see \cite[Theorem 9.2]{chiang-feng} and \cite[Theorem 6.2.3]{chen-book}.\par

\begin{theorem}\label{T7} Let $A_{0}, \ldots, A_{n}$ be meromorphic functions on $\mathbb{C}^{m}$ such that there exists an integer $k\in\{0, \ldots, n\}$ satisfying
\begin{eqnarray*}
\rho(A_{k})>\max\{\rho(A_{j}): 0\leq j\leq n, j\neq k\}, \,\,\mbox{and}\,\, \delta_{A_{k}}(\infty)>0.
\end{eqnarray*} If $f$ is a nontrivial meromorphic solution of linear partial difference equation
\begin{eqnarray}\label{E13}
A_{n}(z)f(z+c_{n})+\ldots+A_{1}(z)f(z+c_{1})+A_{0}(z)f(z)=0
\end{eqnarray} where $c_{1}, \ldots, c_{n}$ are distinct values of $\mathbb{C}^{m}\setminus\{0\},$ then we have $\rho(f)\geq\rho(A_{k})+1.$
\end{theorem}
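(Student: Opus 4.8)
The plan is to isolate the dominant coefficient $A_k$ from \eqref{E13} and bound its Nevanlinna characteristic from above in terms of $T(r,f)$, exploiting that $f$ enters only through proximity functions of difference quotients, which the finite-order logarithmic difference lemma (Theorem \ref{T1'}) controls by $O(r^{\rho(f)-1+\varepsilon})$. First I would dispose of the trivial case: if $\rho(f)=\infty$ the inequality $\rho(f)\ge\rho(A_k)+1$ holds automatically, so I may assume $f$ has finite order, which is exactly what licenses the use of Theorem \ref{T1'}. Adopting the convention $c_0=0$ so that the term $A_0(z)f(z)$ reads $A_0(z)f(z+c_0)$, I would solve \eqref{E13} for the $k$-th term and divide by $f(z+c_k)$, obtaining
\[
A_k(z)=-\sum_{j\ne k}A_j(z)\,\frac{f(z+c_j)}{f(z+c_k)}.
\]

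Next I would pass to proximity functions. Using subadditivity of $m(r,\cdot)$ together with $m(r,A_jg_j)\le m(r,A_j)+m(r,g_j)$ and the factorization $\frac{f(z+c_j)}{f(z+c_k)}=\frac{f(z+c_j)}{f(z)}\cdot\frac{f(z)}{f(z+c_k)}$, Theorem \ref{T1'} yields $m\bigl(r,\frac{f(z+c_j)}{f(z+c_k)}\bigr)=O\bigl(r^{\rho(f)-1+\varepsilon}\bigr)$ for every $j\ne k$. Hence
\[
m(r,A_k)\le\sum_{j\ne k}m(r,A_j)+O\bigl(r^{\rho(f)-1+\varepsilon}\bigr)\le\sum_{j\ne k}T(r,A_j)+O\bigl(r^{\rho(f)-1+\varepsilon}\bigr).
\]
Writing $\beta=\max_{j\ne k}\rho(A_j)<\rho(A_k)$ and choosing $\varepsilon>0$ so small that $\beta+\varepsilon<\rho(A_k)$, the sum over $j\ne k$ is $O\bigl(r^{\beta+\varepsilon}\bigr)$.

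The decisive step is to upgrade this bound on $m(r,A_k)$ to a bound on the full characteristic $T(r,A_k)$, and this is exactly where the hypothesis $\delta_{A_k}(\infty)>0$ enters. By definition of the deficiency one has $N(r,A_k)\le\bigl(1-\tfrac{\delta_{A_k}(\infty)}{2}\bigr)T(r,A_k)$ for all large $r$, so $m(r,A_k)=T(r,A_k)-N(r,A_k)\ge\tfrac{\delta_{A_k}(\infty)}{2}T(r,A_k)$. Combining this with the previous estimate gives
\[
\frac{\delta_{A_k}(\infty)}{2}\,T(r,A_k)\le O\bigl(r^{\beta+\varepsilon}\bigr)+O\bigl(r^{\rho(f)-1+\varepsilon}\bigr).
\]
Finally I would evaluate this along a sequence $r_m\to\infty$ realizing the order, so that $T(r_m,A_k)\ge r_m^{\rho(A_k)-\varepsilon}$. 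Since $\beta+\varepsilon<\rho(A_k)-\varepsilon$, the term $O\bigl(r^{\beta+\varepsilon}\bigr)$ is absorbed by the left-hand side, forcing $\rho(A_k)-\varepsilon\le\rho(f)-1+\varepsilon$; letting $\varepsilon\to0$ yields $\rho(f)\ge\rho(A_k)+1$.

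I expect the main obstacle to be precisely the passage from $m(r,A_k)$ to $T(r,A_k)$: the logarithmic difference lemma only ever controls proximity functions, so without the deficiency assumption the poles of $A_k$, i.e.\ $N(r,A_k)$, would be uncontrolled and the argument would collapse to the much weaker $\rho(A_k)\le\rho(f)$. The role of $\delta_{A_k}(\infty)>0$ is to guarantee that a fixed positive proportion of $T(r,A_k)$ is carried by $m(r,A_k)$, which is the only part the estimate can detect; the extra $+1$ in the conclusion is then supplied entirely by the $r^{\rho(f)-1+\varepsilon}$ gain in Theorem \ref{T1'}.
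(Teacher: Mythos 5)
Your proposal is correct and takes essentially the same route as the paper's own proof: isolate $A_k$ from the equation, control the quotients $f(z+c_j)/f(z+c_k)$ by the finite-order logarithmic difference lemma (Theorem \ref{T1'}), use $\delta_{A_k}(\infty)>0$ to pass from $m(r,A_k)$ to $\tfrac{\delta}{2}T(r,A_k)$, and conclude along a sequence $r_m$ realizing $\rho(A_k)$. The only cosmetic difference is that you spell out the factorization $\frac{f(z+c_j)}{f(z+c_k)}=\frac{f(z+c_j)}{f(z)}\cdot\frac{f(z)}{f(z+c_k)}$, which the paper leaves implicit.
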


\begin{proof} If $\rho(A_{k})=\infty,$ then we obviously get from \eqref{E13} that $f$ must be of infinite order. Without loss of generality, we assume $+\infty>\rho(A_{k})>0.$ In this case, it gives that $A_{k}$ must be transcendental. We find that there is nothing to do if $f$ is of infinity order. So, we may assume that $\rho(f)<+\infty.$ From the the equation \eqref{E13}, we get that the solution $f$ of \eqref{E13} can not be any nonzero rational function. Now we only need assume that $f$ is a transcendental meromorphic function with finite order.  The equation \eqref{E13} gives
\begin{eqnarray}\label{E14}
-A_{k}&=&A_{n}\frac{f(z+c_{n})}{f(z+c_{k})}+\ldots+A_{k+1}\frac{f(z+c_{k+1})}{f(z+c_{k})}\\\nonumber
&&+A_{k-1}\frac{f(z+c_{k-1})}{f(z+c_{k})}+\ldots+A_{0}\frac{f(z)}{f(z+c_{k})}.
\end{eqnarray}
Since $\delta:=\delta_{A_{k}}(\infty)>0,$ by the definition we get that \begin{eqnarray}\label{E16}N(r, A_{k})<(1-\frac{\delta}{2})T_{A_{k}}(r).\end{eqnarray} It yields by Theorem \ref{T1'} that \begin{eqnarray}\label{E15}
m(r, \frac{f(z+c_{j})}{f(z+c_{k})})=O(r^{\rho(f)-1+\varepsilon})
\end{eqnarray}for any $\varepsilon(>0),$ where $j\in \{0, 1, \ldots, n\}\setminus\{k\}$ and $c_{0}=0.$ Then from \eqref{E14}, \eqref{E16} and \eqref{E15}, we have
\begin{eqnarray}\label{E17}
\frac{\delta}{2}T(r, A_{k})&\leq& T(r, A_{k})-N(r, A_{k})\\\nonumber
&=&m(r, A_{k})\\\nonumber
&\leq&\sum_{0\leq j\leq n; j\neq k}m(r, A_{j})+\sum_{0\leq j\leq n; j\neq k}m(r, \frac{f(z+c_{j})}{f(z+k)})+O(1)\\\nonumber
&\leq& \sum_{0\leq j\leq n; j\neq k}T(r, A_{j})+O(r^{\rho(f)-1+\varepsilon}).
\end{eqnarray}
Set $$\max\{\rho(A_{j}): 0\leq j\leq n, j\neq k\}:=\sigma<\rho(A_{k}):=\rho$$  such that $\rho-\sigma>3\varepsilon>0.$
Then for the above $\varepsilon>0,$
$$T(r, A_{j})<r^{\sigma+\varepsilon}<r^{\rho-2\varepsilon}$$
holds for all $0\leq j\leq n, j\neq k.$ From the definition of order of $A_{k},$ there exists a sequence $\{r_{m}\}_{m=1}^{+\infty}$ (with $r_{m}\rightarrow\infty$ as $m\rightarrow\infty$)
such that $$T(r_{m}, A_{k})>r_{m}^{\rho-\varepsilon}$$ for sufficiently large $r_{m}.$ Hence, it follows  from \eqref{E17} that
\begin{eqnarray*}
\frac{\delta}{2}r_{m}^{\rho-\varepsilon}&\leq& (n-1) r_{m}^{\sigma+\varepsilon}+O(r_{m}^{\rho(f)-1+\varepsilon})\\
&\leq& (n-1) r_{m}^{\rho-2\varepsilon}+O(r_{m}^{\rho(f)-1+\varepsilon}),
\end{eqnarray*} and thus \begin{eqnarray*}
(\frac{\delta}{2}+o(1))r_{m}^{\rho-\varepsilon}&\leq& O(r_{m}^{\rho(f)-1+\varepsilon}).
\end{eqnarray*} This implies $\rho(f)\geq \rho+1=\rho(A_{k})+1.$
\end{proof}

Obvious, if the dominant coefficient $A_{k}$ is holomorphic, then $\delta_{A_{k}}(\infty)>0.$ Hence we get immediately the following corollary.\par

\begin{corollary} Let $A_{0}, \ldots, A_{n}$ be entire functions on $\mathbb{C}^{m}$ such that there exists an integer $k\in\{0, \ldots, n\}$ satisfying
\begin{eqnarray*}
\rho(A_{k})>\max\{\rho(A_{j}): 0\leq j\leq n, j\neq k\}.
\end{eqnarray*} If $f$ is a nontrivial entire solution of linear partial difference equation
\begin{eqnarray*}
A_{n}(z)f(z+c_{n})+\ldots+A_{1}(z)f(z+c_{1})+A_{0}(z)f(z)=0
\end{eqnarray*} where $c_{1}, \ldots, c_{n}$ are distinct values of $\mathbb{C}^{m}\setminus\{0\},$ then we have $\rho(f)\geq\rho(A_{k})+1.$
\end{corollary}

\begin{example}
Let $c\in\mathbb{C}.$ Then the entire function $w(z)=e^{z^{2}+z}$ of one variable is a solution of the linear partial difference equation
$$\frac{1}{e^{c^{2}+c}}w(z+c)-e^{2zc}w(z)=0.$$
Here $\rho(w)=2$ and $\rho(\frac{1}{e^{c^{2}+c}})=0$ and  $\rho(-e^{2zc})=1.$ This means that the conclusion $\rho(f)\geq \rho(A_{k})+1$ in Theorem \ref{T7} is sharp.
\end{example}

\begin{example}Let $c_{1}=(1, 0), c_{2}=(0, i)\in\mathbb{C}^{2}.$ Then $w(z)=e^{z_{1}^{2}+z_{2}^{2}}$ is an entire solution of   linear partial difference equation $$A_{2}(z)w(z+c_{2})+A_{1}(z)w(z+c_{1})+A_{0}w(z)=0,$$
that is $$A_{2}(z)w(z_{1}, z_{2}+i)+A_{1}(z)w(z_{1}+1, z_{2})+A_{0}w(z)=0,$$
where $A_{1}(z)\equiv 1,$ $A_{2}(z)=z_{1}+z_{2}$ and $A_{0}(z)=-\left((z_{1}+z_{2})e^{2z_{1}+1}+e^{2iz_{2}-1}\right).$
Here $\rho(w)=2$ and $\rho(A_{1})=\rho(A_{2})=0$ and  $\rho(A_{0})=1.$ This also means that the conclusion $\rho(f)\geq \rho(A_{k})+1$ in Theorem \ref{T7} is sharp.
\end{example}

\begin{example}Let $c_{1}=(1, i), c_{2}=(i, -1)\in\mathbb{C}^{2}.$ Then $w(z)=\frac{e^{z_{1}^{2}-2z_{2}^{2}}}{z_{1}+z_{2}}$ is a meromorphic solution of   linear partial difference equation $$A_{2}(z)w(z+c_{2})+A_{1}(z)w(z+c_{1})+A_{0}w(z)=0,$$
that is $$A_{2}(z)w(z_{1}+1, z_{2}+i)+A_{1}(z)w(z_{1}+i, z_{2}-1)+A_{0}w(z_{1}, z_{2})=0,$$
where $A_{1}(z)=\frac{1}{z_{1}+z_{2}},$ $A_{2}(z)=\frac{z_{1}+z_{2}+i-1}{z_{1}+z_{2}}$ and
$$A_{0}(z)=-\left(\frac{e^{2z_{1}-4z_{2}i+3}}{z_{1}+z_{2}+1+i}+e^{2iz_{1}+4z_{2}-3}\right).$$
Here $\rho(w)=2$ and $\rho(A_{1})=\rho(A_{2})=0$ and  $\rho(A_{0})=1.$ This also means that the conclusion $\rho(f)\geq \rho(A_{k})+1$ in Theorem \ref{T7} is sharp.
\end{example}

\begin{example}Let $c_{1}=(1, i), c_{2}=(i, 1)\in\mathbb{C}^{2}.$ Then $w(z)=e^{iz_{1}+z_{2}}-1$ is an entire solution of   linear partial difference equation $$A_{2}(z)w(z+c_{2})+A_{1}(z)w(z+c_{1})+A_{0}w(z)=0,$$
that is $$A_{2}(z)w(z_{1}+1, z_{2}+i)+A_{1}(z)w(z_{1}+i, z_{2}+1)+A_{0}w(z_{1}, z_{2})=0,$$
where $A_{1}(z)=A_{2}(z)\equiv 1$ and
$$A_{0}(z)=-1-\left(\frac{e^{iz_{1}+z_{2}+2i}-1}{e^{iz_{1}+z_{2}}-1}\right).$$
Here $\rho(w)=1$ and $\rho(A_{1})=\rho(A_{2})=0,$  $\rho(A_{0})=1$ and $\delta_{A_{0}}(\infty)=0.$ This implies that the assumption $\delta_{A_{k}}(\infty)>0$ in Theorem \ref{T7} is necessary.
\end{example}

\begin{question}It is obvious that $w(z)=\frac{e^{z_{1}+2z_{2}}}{z+{2z_{2}}}$ is a meromorphic solution of the partial difference equation $$A(z)w(z_{1}+1, z_{2}-1)+B(z)w(z_{1}, z_{2})=0,$$
where the coefficients $B(z)=-\frac{z_{1}+2z_{2}}{e}$ and $A(z)=z_{1}+2z_{2}-1$ are polynomials in $\mathbb{C}^{2}.$ Obviously $\rho(w)=1=\rho(A)+1=\rho(B)+1.$ Thus we ask what can be said for a general partial difference equation \eqref{E13} with all polynomial coefficients $A_{0}, \ldots, A_{n}$?
\end{question}

Since there is the model of the discrete or finite Poisson equation (see \cite{Cheng-book}) $$u_{i, j+1}+u_{i+1, j}+u_{i, j-1}+u_{i-1,j}-4u_{i,j}=g_{ij},$$ it is interesting to consider the following result on linear nonhomogeneous partial difference equations.\par

\begin{theorem}\label{T8} Let a meromorphic function $f$ on $\mathbb{C}^{m}$ be a solution of linear partial difference equation
\begin{eqnarray}\label{E3.12}
A_{n}(z)f(z+c_{n})+\ldots+A_{1}(z)f(z+c_{1})+A_{0}(z)f(z)=F(z),
\end{eqnarray} where meromorphic coefficients $A_{0}, \ldots, A_{n}, F(\not\equiv 0)$ on $\mathbb{C}^{m}$ are small functions with respect to $f,$ and $c_{1}, \ldots, c_{n}$ are distinct values of $\mathbb{C}^{m}\setminus\{0\}.$ If $\limsup_{r\rightarrow\infty}\frac{\log T(r, f)}{r}=0,$ then $\delta_{f}(0)=0.$
\end{theorem}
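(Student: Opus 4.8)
The plan is to isolate $1/f$ from the equation and then show that its proximity function is negligible compared with $T(r,f)$, so that $T(r,f)$ is carried almost entirely by the counting function $N(r,1/f)$. Since $f$ admits the nontrivial $F$ as a small function, necessarily $T(r,f)\to\infty$, so all the ``small function'' relations are meaningful. Dividing \eqref{E3.12} by $f(z)$ and then by $F(z)$ (both of which are $\not\equiv 0$), I would write
$$\frac{1}{f(z)}=\frac{1}{F(z)}\left(A_{0}(z)+\sum_{j=1}^{n}A_{j}(z)\,\frac{f(z+c_{j})}{f(z)}\right).$$

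Next I would apply the standard subadditivity of the proximity function under sums and products, which yields
$$m\!\left(r,\frac{1}{f}\right)\le m\!\left(r,\frac{1}{F}\right)+m(r,A_{0})+\sum_{j=1}^{n}m(r,A_{j})+\sum_{j=1}^{n}m\!\left(r,\frac{f(z+c_{j})}{f}\right)+O(1).$$
Here the first $n+2$ terms on the right are $o(T(r,f))$ because $A_{0},\dots,A_{n},F$ are small with respect to $f$ and $m(r,1/F)\le T(r,1/F)=T(r,F)+O(1)$ by the first main theorem. The remaining sum is controlled by the logarithmic difference lemma: under the hypothesis $\limsup_{r\to\infty}\frac{\log T(r,f)}{r}=0$, Theorem \ref{T1} gives $m(r,f(z+c_{j})/f)=o(T(r,f))$ for all $r$ outside a set $E$ with $\overline{dens}E=0$. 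Combining these estimates, I obtain $m(r,1/f)=o(T(r,f))$ for $r\notin E$.

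Finally I would feed this into the first main theorem, $T(r,f)=T(r,1/f)+O(1)=m(r,1/f)+N(r,1/f)+O(1)$, to get $N(r,1/f)=T(r,f)+o(T(r,f))$ for $r\notin E$, so that $N(r,1/f)/T(r,f)\to 1$ along the complement of $E$. The one point to handle carefully is the exceptional set in the definition of the defect: since $\overline{dens}E=0$, the complement of $E$ is unbounded (indeed of density one), so there is a sequence $r_{k}\to\infty$ with $r_{k}\notin E$ along which $N(r_{k},1/f)/T(r_{k},f)\to 1$, whence $\limsup_{r\to\infty}N(r,1/f)/T(r,f)\ge 1$. Since $N(r,1/f)\le T(r,1/f)=T(r,f)+O(1)$ trivially supplies the reverse bound $\le 1$, the limsup equals $1$ and therefore $\delta_{f}(0)=1-1=0$. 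I expect no serious obstacle beyond this bookkeeping with $E$; the essential move is simply the rearrangement that expresses $1/f$ through the logarithmic difference quotients $f(z+c_{j})/f(z)$, after which the smallness of the coefficients and Theorem \ref{T1} do all the work.
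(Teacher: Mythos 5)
Your proposal is correct and follows essentially the same route as the paper: the identical rearrangement $\frac{1}{f}=\frac{1}{F}\bigl(A_{0}+\sum_{j=1}^{n}A_{j}\frac{f(z+c_{j})}{f}\bigr)$, the same appeal to Theorem \ref{T1} and the first main theorem to get $m(r,1/f)=o(T(r,f))$ off the exceptional set $E$. The only difference is presentational: the paper argues by contradiction from $\delta_{f}(0)>0$ via $N(r,1/f)<(1-\frac{\delta_{f}(0)}{2})T(r,f)$, whereas you conclude directly by extracting a sequence $r_{k}\notin E$ (legitimate, since $\overline{dens}\,E=0$ makes the complement unbounded) along which $N(r_{k},1/f)/T(r_{k},f)\to 1$ — arguably a cleaner handling of the exceptional-set bookkeeping, but the same proof in substance.
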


\begin{proof} Assume that the defect of zeros of $f$ satisfies $\delta_{f}(0)>0.$ Then we have
$$N(r, \frac{1}{f})<(1-\frac{\delta_{f}(0)}{2})T(r, f).$$ It follows from the equation \eqref{E3.12} that \begin{eqnarray*}\frac{1}{f}=\frac{1}{F}\left(A_{n}\frac{f(z+c_{n})}{f}+A_{n-1}\frac{f(z+c_{n-1})}{f}+\ldots+A_{1}\frac{f(z+c_{1})}{f}+A_{0}\right).\end{eqnarray*}
Under the assumption of $\limsup_{r\rightarrow\infty}\frac{\log T(r, f)}{r}=0,$ we get from the first main theorem and Theorem \ref{T1} that\begin{eqnarray*}m(r, \frac{1}{f})&\leq& m\left(r, \frac{1}{F}\right)+\sum_{j=1}^{n}m\left(r,\frac{f(z+c_{j})}{f}\right)+\sum_{j=0}^{n}m\left(r,  A_{j}\right)+O(1)\\&\leq&T(r, F)+\sum_{j=0}^{n}T(r, A_{j})+\sum_{j=1}^{n}m\left(r,\frac{f(z+c_{j})}{f}\right)+O(1)\\
 &=&o(T(r, f)).\end{eqnarray*} for $r\not\in E$ where $E$ is a set with $\overline{dens}E=0.$ This gives
\begin{eqnarray*} T(r, f)+O(1)&=&T(r, \frac{1}{f})=m(r, \frac{1}{f})+N(r, \frac{1}{f})\\
&\leq&N(r, \frac{1}{f})+o(T(r, f))\\
&\leq&(1-\frac{\delta_{f}(0)}{2})T(r, f)+o(T(r, f))
 \end{eqnarray*}  for all $r\not\in E.$ Therefore, we get
$$\delta_{f}(0)T(r,f)\leq o(T(r,f))$$
for all $r\not\in E$ where $E$ is a set with $\overline{dens}E=0.$ This is a contradiction.
\end{proof}

\begin{example} It is obvious that $f(z_{1}, z_{2})=z_{1}e^{z_{2}}$ with $\limsup_{r\rightarrow\infty}\frac{\log T(r, f)}{r}=0$ and $\delta_{f}(0)>0$ is an entire solution of partial difference equation
$$\frac{1}{e^{z_{2}}}f(z_{1}, z_{2}-1)+\frac{1}{z_{1}+1}f(z_{1}+1, z_{2})=e^{z_{2}}+\frac{z_{1}}{e}.$$ Here the coefficient $\frac{1}{z_{1}+1}$ is a small function with respect to $f,$ however the other coefficients $\frac{1}{e^{z_{2}}}$ and  $e^{z_{2}}+\frac{z_{1}}{e}$ are not. This means that it is necessary of  the assumption that the coefficients are small with respect to the solution in Theorem \ref{T8}.
\end{example}

\begin{example}
Let $c_{2}=(1,0)$ and $c_{1}=(0, -2i).$ Then $f(z)=e^{z_{1}^{2}+z_{2}z_{1}}$ with $\limsup_{r\rightarrow\infty}\frac{\log T(r, f)}{r}=0$ and $\delta_{f}(0)=1$ is an entire solution of partial difference equation
$$\frac{1}{e^{1+2z_{1}}}f(z_{1}+1, z_{2})-e^{2iz_{2}}f(z_{1}, z_{2}-2i)=0.$$ Here the coefficient $\frac{1}{e^{1+2z_{1}}}$ and $-e^{2iz_{2}}$ are small functions with respect to the solution $f.$  This implies that the coefficient $F$ in Theorem \ref{T8} can not be identical to zero.
\end{example}

\subsection{Difference analogues of Tumura-Clunie theorem in several complex variables}\noindent

Now, we will extend difference version of Tumura-Clunie theorem from one variable to several variables. The Clunie lemma \cite{clunie} for meromorphic functions of one variable in Nevanlinna theory has been a powerful tool of studying complex differential equations and related fields, particularly the lemma has been used to investigate the value distribution of certain differential polynomials; see \cite{clunie} for the original versions of these results, as well as \cite{hayman, laine}. A slightly more general version of the Clunie lemma can be found in \cite[pp.~218--220]{he-xiao}; see also \cite[Lemma~2.4.5]{laine}. In 2007, the additional assumptions in the He-Xiao version  of the Clunie lemma have been removed by Yang and Ye in \cite[Theorem 1]{yang-ye}. A generalized Clunie lemma for meromorphic functions of several complex variables was proved in \cite{libaoqin}; for some special cases refer to see \cite{hu-li-yang-1, hu-yang-1}. Recently, Hu and Yang \cite{hu-yang} extended the classical Tumura-Clunie theorem (\cite[Theorem 3.9]{hayman} and \cite{mues-steinmetz}) for meromorphic functions of one variable to that of meromorphic functions of several complex variables.  \par

We prove a difference counterpart of the Hu-Yang's version \cite{hu-yang} of Tumura-Clunie theorem in several complex variables as follows, which in fact generalize and extend the corresponding result of one variable due to Laine and Yang \cite[Theorem~1]{laine-yang-2} modified later by Chen, Huang and Zheng \cite{chen-huang-zheng} (see also\cite[Theorem 4.3.4]{chen-book}). Set a difference polynomial of several complex variables
\begin{eqnarray}\label{E-7.4}
G(z, f)=\sum_{\lambda\in J} b_{\lambda}(z)\prod_{j=1}^{\tau_{\lambda}}f(z+q_{\lambda, j})^{\mu_{\lambda,j}},
\end{eqnarray}where $\max_{\lambda\in J}\sum_{j=1}^{\tau_{\lambda}}\mu_{\lambda,j}=n,$ and $q_{\lambda,j}\not=0$ for at least one of the constants $q_{\lambda,j}$. Moreover, we assume that the coefficients in \eqref{E-7.4} are meromorphic functions on~$\mathbb{C}^{m}$ and small with respect to the function $f$, which is meromorphic on $\mathbb{C}^{m}.$\par

\begin{theorem}\label{T-8.2}
Let $f$ be a meromorphic function on $\mathbb{C}^{m}$ with $$\limsup_{r\rightarrow\infty}\frac{\log T(r, f)}{r}=0,$$ such that
\begin{eqnarray}\label{E-7.5}
N\left(r, \frac{1}{f}\right)+N(r, f)=o(T(r, f)).
\end{eqnarray}  Suppose that the difference polynomial \eqref{E-7.4} of $f(z)$ and its shifts is  of maximal total degree $n.$ If $G$ also satisfies
\begin{eqnarray}\label{E-7.7}
\sum_{\lambda\in J_{n-1}} b_{\lambda}(z)\prod_{j=1}^{\tau_{\lambda}}f(z+q_{\lambda, j})^{\mu_{\lambda,j}}\not\equiv 0,
\end{eqnarray}
where $J_{n-1}=\{\lambda\in J: \sum_{j=1}^{\tau_{\lambda}}\mu_{\lambda,j}=n-1\},$ then $G$ must
satisfy
\begin{eqnarray*}
N\left(r, \frac{1}{G}\right)\neq o(T(r, f)).
\end{eqnarray*}
\end{theorem}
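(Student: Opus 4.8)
The plan is to prove the statement by contradiction, assuming that $N\left(r,\frac{1}{G}\right)=o(T(r,f))$ and deriving a violation of the hypothesis \eqref{E-7.5}. The key algebraic idea is to write $G$ in a form where the logarithmic difference lemma (Theorem \ref{T1}) can be applied to control the proximity function of $f^n$ in terms of $G$, its zeros, and shift quotients of $f$. Specifically, I would factor the leading-degree part of $G$: writing $G = f^{n}P(z) + (\text{lower degree terms})$ is too crude because the leading part is a sum over $\lambda\in J_n$ (where $J_n=\{\lambda:\sum_j\mu_{\lambda,j}=n\}$), so instead I would divide the entire expression \eqref{E-7.4} by $f^{n}$ and observe that each monomial $b_\lambda \prod_j f(z+q_{\lambda,j})^{\mu_{\lambda,j}}$ of total degree $d_\lambda$ becomes $b_\lambda f^{d_\lambda - n}\prod_j \left(\frac{f(z+q_{\lambda,j})}{f(z)}\right)^{\mu_{\lambda,j}}$. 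This isolates the shift quotients, each of whose proximity function is $o(T(r,f))$ off a set of zero upper density by Theorem \ref{T1}.

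The heart of the argument is a Valiron--Mohon'ko / Tumura-Clunie style estimate for $m\left(r,\frac{1}{f}\right)$ or for $m(r,f^n)$. First I would establish, via the hypothesis \eqref{E-7.5} together with the first main theorem, that $m(r,f)=T(r,f)+o(T(r,f))$ and $m\left(r,\frac{1}{f}\right)=T(r,f)+o(T(r,f))$, since both counting functions are negligible. Next, from the identity obtained by dividing \eqref{E-7.4} by the top-degree behaviour, I would extract a relation expressing a suitable power of $f$ (essentially $f^n$ or $f$) in terms of $G/f^{?}$ plus bounded combinations of small coefficients and shift quotients. The condition \eqref{E-7.7}, that the degree $n-1$ part is not identically zero, is precisely what prevents a degeneracy in which the top degree $n$ and degree $n-1$ contributions conspire to cancel the dominant term; it guarantees that the ``effective'' polynomial relation genuinely has degree one more than it would otherwise, so that $T(r,G)$ and $T(r,f)$ are linked by $T(r,G)=nT(r,f)+o(T(r,f))$ up to controllable error.

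The main obstacle I anticipate is handling the mixed structure of $G$ as a \emph{sum} of monomials of varying degrees $d_\lambda\le n$, rather than a clean single leading term, so that one cannot directly apply the one-variable Tumura-Clunie manipulation. The device I would use is the standard substitution $g = G/f^{n}$ or an auxiliary comparison with the degree $n-1$ block in \eqref{E-7.7}: I would form the quotient of $G$ by the degree $(n-1)$ subsum $H:=\sum_{\lambda\in J_{n-1}}b_\lambda\prod_j f(z+q_{\lambda,j})^{\mu_{\lambda,j}}$ and note that $G/H$ is, up to shift quotients with negligible proximity, a rational expression in $f$ of degree one. Assuming $N\left(r,\frac{1}{G}\right)=o(T(r,f))$ then forces $m\left(r,\frac{1}{G}\right)=T(r,G)+o(T(r,f))$, and feeding this through the logarithmic difference lemma bounds $m(r,f)$ strictly below $T(r,f)$, contradicting $m(r,f)=T(r,f)+o(T(r,f))$ derived from \eqref{E-7.5}.

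Thus the proof structure is: (i) use \eqref{E-7.5} and the first main theorem to pin down the proximity functions; (ii) algebraically rearrange \eqref{E-7.4} and \eqref{E-7.7} to isolate shift quotients; (iii) apply Theorem \ref{T1} to kill those quotients off a zero-density set; (iv) invoke the Valiron--Mohon'ko theorem \cite[Theorem 3.4]{hu-yang-1} in several variables to relate $T(r,G)$ to $nT(r,f)$; and (v) close the contradiction against the assumed smallness of $N\left(r,\frac{1}{G}\right)$. The delicate bookkeeping of which monomials survive and ensuring the exceptional set retains zero upper density throughout the finitely many applications of Theorem \ref{T1} will require care, but no single step is expected to be conceptually harder than the degenerate-cancellation issue addressed by \eqref{E-7.7}.
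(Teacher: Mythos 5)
Your preparatory steps (i)--(iv) do match the opening of the paper's proof: the paper likewise rewrites $G(z,f)=\sum_{j=0}^{n}\tilde{b}_{j}(z)f(z)^{j}$ by factoring each monomial into shift quotients times a power of $f$, uses Theorem \ref{T1} to make $m(r,\tilde{b}_{j})=o(T(r,f))$, and (a point you omit) uses \eqref{E-7.5} together with Lemma \ref{L1} to control $N(r,\tilde{b}_{j})$ so that the $\tilde{b}_{j}$ are genuinely small. But your closing step (v) has a genuine gap: the claim that $m(r,1/G)=T(r,G)+o(T(r,f))$, ``fed through the logarithmic difference lemma,'' bounds $m(r,f)$ strictly below $T(r,f)$ is not a consequence of any tool you list, and it cannot be. Already for $n=1$, where $G=\tilde{b}_{1}f+\tilde{b}_{0}$ with $\tilde{b}_{1},\tilde{b}_{0}$ small and nonzero, the conclusion $N(r,1/G)\neq o(T(r,f))$ under \eqref{E-7.5} says precisely that $f$ cannot have the three distinct small targets $0$, $\infty$, $-\tilde{b}_{0}/\tilde{b}_{1}$ each with counting function $o(T(r,f))$; that is a deficiency-sum bound, i.e.\ a second main theorem with small-function targets, which the log-difference lemma plus Valiron--Mohon'ko cannot deliver. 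This is exactly where the paper invokes two deep inputs absent from your sketch: the Hu--Yang Tumura--Clunie theorem in several variables (Lemma \ref{L-8.1}), which upgrades the contradiction hypothesis $N(r,1/G)=o(T(r,f))$ to the identity $G/\tilde{b}_{n}=\left(f+\frac{\alpha}{n}\right)^{n}$ with $\alpha$ small, and the second main theorem for small targets (Lemma \ref{L4}, resting on Yamanoi's theorem), applied to $0$, $\infty$, $-\alpha/n$, which gives $T(r,f)\leq N(r,1/f)+N(r,f)+N\left(r,\frac{1}{f+\alpha/n}\right)+o(T(r,f))=o(T(r,f))$, the desired contradiction.

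Relatedly, you misidentify the role of \eqref{E-7.7}. It is not what secures $T(r,G)=nT(r,f)+o(T(r,f))$; that relation is governed by the nonvanishing of the top coefficient $\tilde{b}_{n}$ and holds whether or not the degree $n-1$ block vanishes. In the paper, \eqref{E-7.7} enters only after Tumura--Clunie, forcing $\alpha=\tilde{b}_{n-1}/\tilde{b}_{n}\not\equiv 0$ so that $-\alpha/n$ is a third target distinct from $0$. Without \eqref{E-7.7} the theorem is simply false: for $G=f(z+c)f(z)^{n-1}$ one has $N(r,1/G)\leq N(r,1/f(z+c))+(n-1)N(r,1/f)=o(T(r,f))$ under \eqref{E-7.5}, yet every quantity in your proposed chain --- $m(r,f)=T(r,f)+o(T(r,f))$, $T(r,G)=nT(r,f)+o(T(r,f))$, $m(r,1/G)=T(r,G)+o(T(r,f))$ --- takes exactly the same values as in the general case, so no contradiction can emerge from comparing them; any valid argument must use \eqref{E-7.7} through the zero structure of $G$, as the perfect-power identity does. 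Your alternative device of dividing by the degree-$(n-1)$ subsum $H$ gives no control on where the zeros of $G$ lie (the quotient $G/H$ also carries negative powers of $f$, not a clean degree-one expression) and does not substitute for the two missing lemmas.
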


For the proof of Theorem \ref{T-8.2}, we first need the Tumura-Clunie theorem of several complex variables due to Hu and Yang.\par

\begin{lemma}\label{L-8.1}\cite[Theorem 2.1]{hu-yang} Suppose that $f$ is meromorphic and not constant in $\mathbb{C}^{m},$ that $$g=f^{n}+P_{n-1}(f),$$ where $P_{n-1}(f)$ is a differential polynomial of degree at most $n-1$ in $f,$ and that $$N(r, f)+N\left(r, \frac{1}{g}\right)=o(T(r, f)).$$ Then $$g=\left(f+\frac{\alpha}{n}\right)^{n},$$ where $\alpha$ is a meromorphic function in $\mathbb{C}^{m}$, small with respect to $f$, and determined by the terms of degree $n-1$ in $P_{n-1}(f)$ and by $g.$
\end{lemma}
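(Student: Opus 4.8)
The plan is to adapt the classical one-variable Tumura--Clunie argument (\cite[Theorem~3.9]{hayman}; see also \cite{laine}) to $\mathbb{C}^{m}$, the only genuinely several-variable inputs being the logarithmic derivative lemma and the Clunie lemma in several complex variables (available in \cite{libaoqin, hu-yang-1, libaoqin-1}). Throughout I write $S(r,f)$ for any quantity that is $o(T(r,f))$ as $r\to\infty$ outside a set of finite measure. The first step is to record a consequence of the hypothesis $N(r,f)+N(r,1/g)=o(T(r,f))$: since $N(r,f)=S(r,f)$ and the coefficients are small, we have $N(r,g)\le n\,N(r,f)+S(r,f)=S(r,f)$, so for every multi-index $\beta$ the poles of the logarithmic partial derivative $\partial^{\beta}g/g$ lie among the zeros and poles of $g$, whence $N(r,\partial^{\beta}g/g)=S(r,f)$; combined with the several-variable logarithmic derivative lemma $m(r,\partial^{\beta}g/g)=S(r,f)$, this shows that every $\partial^{\beta}g/g$ is a small function with respect to $f$. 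This is where the assumption $N(r,1/g)=o(T(r,f))$ enters decisively.

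Next I would strip off the leading power. Fix a coordinate $z_{k}$ and put $\beta_{k}=\partial_{z_{k}}g/g$, a small function by the previous step. Differentiating $g=f^{n}+P_{n-1}(f)$ and using $\partial_{z_{k}}g=\beta_{k}g$ gives the identity
\begin{equation*}
f^{n-1}\bigl(n\,\partial_{z_{k}}f-\beta_{k}f\bigr)=\beta_{k}P_{n-1}(f)-\partial_{z_{k}}P_{n-1}(f),
\end{equation*}
whose right-hand side is a differential polynomial in $f$ of total degree at most $n-1$ with small coefficients. Applying the several-variable Clunie lemma \cite{libaoqin} with exponent $n-1$ then yields $m\bigl(r,n\,\partial_{z_{k}}f-\beta_{k}f\bigr)=S(r,f)$. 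In parallel, collecting in $P_{n-1}(f)$ the monomials of total degree exactly $n-1$ and writing each $b_{\lambda}\prod_{j}(\partial^{\beta_{j}}f)^{\mu_{\lambda,j}}$ as $b_{\lambda}f^{n-1}\prod_{j}(\partial^{\beta_{j}}f/f)^{\mu_{\lambda,j}}$, the logarithmic derivative lemma produces a meromorphic $\alpha$ with $m(r,\alpha)=S(r,f)$ such that $P_{n-1}(f)-\alpha f^{n-1}$ has total degree at most $n-2$. Setting $u=f+\alpha/n$ and expanding by the binomial theorem, I obtain $g=u^{n}+\Phi(z,f)$, where $\Phi$ is a differential polynomial of total degree at most $n-2$.

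The heart of the proof --- and the step I expect to be the main obstacle --- is to show $\Phi\equiv0$, equivalently $g=(f+\alpha/n)^{n}$. The plan is an induction on $n$: the case $n=1$ is trivial, and for the inductive step one feeds the relation $g=u^{n}+\Phi$ back into the Clunie/logarithmic-derivative machinery to deduce that the auxiliary expressions $n\,\partial_{z_{k}}u-(\partial_{z_{k}}g/g)u$ have proximity $S(r,f)$, and then rules out $\Phi\not\equiv0$ by confronting $N(r,1/g)=S(r,f)$ with the second main theorem in several variables applied to $u$, which would otherwise force $g$ to acquire too many zeros. Two points require care here and constitute the real difficulty. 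First, the candidate $\alpha$ is only controlled in proximity a priori, so one must use $N(r,f)+N(r,1/g)=S(r,f)$ to verify that $u$ and the coefficients of $\Phi$ do not introduce uncontrolled counting terms; the poles of $\alpha$ at zeros of $f$ must ultimately be absorbed, and it is precisely here that $\alpha$ becomes genuinely small and ``determined by $g$'' as in the statement. Second, one must ensure that the notion of total degree in the partial derivatives $\partial^{\beta}f$ is the one for which the several-variable Clunie lemma of \cite{libaoqin} is valid; granting this, the degree bookkeeping is identical to the one-variable case and the induction closes, giving $g=(f+\alpha/n)^{n}$ with $\alpha$ small with respect to $f$.
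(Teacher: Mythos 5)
The paper itself contains no proof of Lemma \ref{L-8.1}: it is quoted verbatim from Hu and Yang \cite[Theorem 2.1]{hu-yang}, so your sketch has to be judged on its own terms. Its opening is sound and matches the classical Tumura--Clunie line: the smallness of $\partial^{\beta}g/g$ does follow from $N(r,g)\le n\,N(r,f)+o(T(r,f))$, $N(r,1/g)=o(T(r,f))$ and the logarithmic derivative lemma, and the identity $f^{n-1}\bigl(n\,\partial_{z_k}f-\beta_k f\bigr)=\beta_k P_{n-1}(f)-\partial_{z_k}P_{n-1}(f)$ combined with the several-variable Clunie lemma is exactly the right second move. But there is a genuine gap precisely where you flag ``the main obstacle,'' and the plan you offer there does not close it.

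Two concrete problems. First, your $\alpha$, built from the degree-$(n-1)$ monomials as a sum of terms $b_{\lambda}\prod_{j}(\partial^{\beta_j}f/f)^{\mu_{\lambda,j}}$, is controlled only in proximity: its poles lie over the zeros of $f$, and nothing in the hypotheses bounds $N(r,1/f)$ --- the assumption concerns $N(r,f)$ and $N(r,1/g)$, not $N(r,1/f)$. Hence $u=f+\alpha/n$ is not yet a legitimate object, and invoking the second main theorem for $u$ (equivalently, with the target $-\alpha/n$) presupposes exactly the smallness of $\alpha$ that you are trying to establish; your proposed induction is circular at this point. Second, you derive $m\bigl(r,\,n\,\partial_{z_k}f-\beta_k f\bigr)=o(T(r,f))$ and then never use it, whereas it is the engine of the classical proofs (Hayman, Mues--Steinmetz, and the Hu--Yang several-variables version): since the poles of $\varphi_k:=n\,\partial_{z_k}f-\beta_k f$ lie over the poles of $f$ and the zeros and poles of $g$, all of which are $o(T(r,f))$-rare, one gets $T(r,\varphi_k)=o(T(r,f))$, hence $\partial_{z_k}f=(\beta_k/n)f+\varphi_k/n$; iterating this relation, every partial derivative $\partial^{\gamma}f$ equals $A_{\gamma}f+B_{\gamma}$ with $A_{\gamma},B_{\gamma}$ small. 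Substituting into $P_{n-1}(f)$ converts $g$ into an ordinary polynomial in $f$ of degree $n$ with small meromorphic coefficients, and only then does $N(r,1/g)=o(T(r,f))$, combined with a small-target second main theorem in $\mathbb{C}^{m}$ (compare Lemma \ref{L4}; one must also handle the fact that the roots of this polynomial are a priori algebroid rather than meromorphic), force all $n$ roots to coalesce, giving $g=(f+\alpha/n)^{n}$ with $\alpha$ now provably small and determined by the degree-$(n-1)$ terms and by $g$. Without this linearization step your degree-$(n-2)$ remainder $\Phi$ cannot be killed, so the proposal as written is an honest plan whose decisive step is missing.
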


We also need the second main theorem for meromorphic functions with small function targets on $\mathbb{C}^{m}.$  It is mentioned in \cite[Theorem 2.1]{chen-yan} that the conclusion is easily extended from the second main theorem for small function targets due to Yamanoi~\cite{yamanoi} by the standard process of averaging over the complex lines in $\mathbb{C}^{m}.$\par

\begin{lemma}\label{L4} Let $f$ be a meromorphic function on $\mathbb{C}^{m},$ and $a_{1}, \ldots a_{q}$ be distinct meromorphic functions "small" with respect to $f.$ Then we have
$$(q-2)T(r, f)\leq \sum_{j=1}^{q}\overline{N}\left(r, \frac{1}{f-a_{j}}\right)+o(T(r, f))$$ for all $r\not\in F,$ where $F$ is a set of finite Lebegue logarithmic measure.\end{lemma}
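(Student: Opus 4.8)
The plan is to derive the several-variable statement from Yamanoi's one-variable second main theorem with small-function targets \cite{yamanoi} by the slicing-and-averaging technique. For $a$ in the projective space $\mathbb{P}^{m-1}(\mathbb{C})$ write $\ell_{a}=\mathbb{C}a\subset\mathbb{C}^{m}$ for the complex line through the origin in the direction $a$, and set $f_{a}(w):=f(wa)$ and $(a_{j})_{a}(w):=a_{j}(wa)$, which are meromorphic functions of the single variable $w\in\mathbb{C}$. Let $d\omega$ be the unitary-invariant probability measure on $\mathbb{P}^{m-1}(\mathbb{C})$, the one induced on the space of lines by $\sigma_{m}$ on $\partial B_{m}(1)$. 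The first step is to record the integral-geometric identities that tie the $\mathbb{C}^{m}$ Nevanlinna functions to the averages over lines of their one-variable counterparts:
\[
T(r,f)=\int_{\mathbb{P}^{m-1}(\mathbb{C})}T(r,f_{a})\,d\omega(a)+O(1),
\]
and, for each target $a_{j}$,
\[
\overline{N}\Big(r,\frac{1}{f-a_{j}}\Big)=\int_{\mathbb{P}^{m-1}(\mathbb{C})}\overline{N}\Big(r,\frac{1}{f_{a}-(a_{j})_{a}}\Big)\,d\omega(a)+O(1).
\]
The characteristic identity comes from unitary invariance together with the definition of $T$ through $\sigma_{m}$; the counting identity comes from the Crofton formula for the intersection of the reduced zero divisor of $f-a_{j}$ with the family of lines through the origin.

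Next I would apply Yamanoi's theorem on almost every line. Since $f$ is nonconstant, $f_{a}$ is nonconstant for $d\omega$-almost every $a$; moreover distinct meromorphic functions on $\mathbb{C}^{m}$ agree only on a proper analytic set, so $(a_{1})_{a},\ldots,(a_{q})_{a}$ remain pairwise distinct on a generic line. Wherever the one-variable smallness $T(r,(a_{j})_{a})=o(T(r,f_{a}))$ is available, Yamanoi's theorem gives
\[
(q-2)T(r,f_{a})\le\sum_{j=1}^{q}\overline{N}\Big(r,\frac{1}{f_{a}-(a_{j})_{a}}\Big)+S(r,f_{a}),
\]
with $S(r,f_{a})=o(T(r,f_{a}))$ off an $r$-set $F_{a}$ of finite logarithmic measure. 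Integrating this against $d\omega$ and substituting the two identities of the first step converts the left side into $(q-2)T(r,f)+O(1)$ and the counting sum into $\sum_{j}\overline{N}(r,\frac{1}{f-a_{j}})+O(1)$, so that only the averaged error must be absorbed into $o(T(r,f))$.

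The main obstacle is that smallness does not pass pointwise from $\mathbb{C}^{m}$ to individual lines: the hypothesis $T(r,a_{j})=o(T(r,f))$ controls only the averages, and $(a_{j})_{a}$ may fail to be small with respect to $f_{a}$ on a positive-measure set of directions, so Yamanoi's theorem cannot be invoked on every line. I would bypass any pointwise-smallness claim by a good-line/bad-line dichotomy. Fixing $\varepsilon>0$, call $a$ bad at radius $r$ if $T(r,(a_{j})_{a})>\varepsilon\,T(r,f_{a})$ for some $j$; then on bad lines $T(r,f_{a})\le\varepsilon^{-1}\sum_{j}T(r,(a_{j})_{a})$, and the averaging identity bounds the total characteristic carried by the bad directions by $\varepsilon^{-1}\sum_{j}(T(r,a_{j})+O(1))=\varepsilon^{-1}o(T(r,f))$. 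Discarding the bad lines therefore costs only $o(T(r,f))$ once $\varepsilon$ is sent to $0$ along a diagonal sequence, while on the good lines the integrated form of the displayed inequality holds. The remaining technical point is that the one-variable exceptional sets $F_{a}$ depend on the direction; I would view $S(r,f_{a})$ as a function of the pair $(r,a)$ and apply Fubini to the product of logarithmic measure in $r$ and $d\omega$ in $a$, combined with a Borel-type growth lemma such as Lemma \ref{L2}, to collapse the averaged error into $o(T(r,f))$ outside a single $r$-set $F$ of finite logarithmic measure. Dividing through and discarding the bounded terms then yields the stated inequality.
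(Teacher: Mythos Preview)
The paper does not supply its own proof of this lemma: it simply records that the result is stated in \cite[Theorem~2.1]{chen-yan} and is obtained from Yamanoi's one-variable theorem \cite{yamanoi} ``by the standard process of averaging over the complex lines in $\mathbb{C}^{m}$.'' Your proposal is precisely this averaging strategy, so at the level of method you and the paper agree.

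That said, the paper is deferring the actual work to \cite{chen-yan}, and your attempt to fill in that work has a genuine gap. Your good-line/bad-line dichotomy is set up at a \emph{fixed} radius $r$: you call $a$ good at $r$ when $T(r,(a_{j})_{a})\le\varepsilon\,T(r,f_{a})$ for every $j$. But Yamanoi's theorem is an asymptotic statement whose hypothesis is $T(r,(a_{j})_{a})=o(T(r,f_{a}))$ as $r\to\infty$; knowing the inequality at one radius does not entitle you to invoke the theorem on that line. So on your ``good'' set you have no second-main-theorem inequality to integrate, and the argument stalls before the averaging step. A workable version of the slicing argument needs either a quantitative form of Yamanoi's error term that is uniform enough to integrate, or an argument that the asymptotic smallness $T(r,(a_{j})_{a})=o(T(r,f_{a}))$ actually holds for $d\omega$-almost every direction $a$; neither is supplied here, and neither is automatic.

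The second obstacle you flag---collapsing the direction-dependent exceptional sets $F_{a}$ into a single $r$-set of finite logarithmic measure---is also not resolved by the sketch you give. Each $F_{a}$ has finite (not zero) logarithmic measure, and the bound is not uniform in $a$, so a bare Fubini argument does not produce a common exceptional set; invoking Lemma~\ref{L2} at this stage is too vague to carry the point. These are exactly the technicalities that make the several-variable extension nontrivial, and they are why the paper outsources the proof to the literature rather than reproducing it.
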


\begin{proof}[Proof of Theorem \ref{T-8.2}] Suppose that the conclusion is not true, that is,
$$N\left(r, \frac{1}{G}\right)= o(T(r, f)).$$ To prove this theorem, we propose to follow the idea in the proof of \cite[Theorem~1]{laine-yang-2}. Since the difference polynomial \eqref{E-7.4} of $f(z)$ and its shifts is of maximal total degree $n,$ we get
\begin{eqnarray*}
G(z, f)&=&\sum_{\lambda\in J} b_{\lambda}(z)\prod_{j=1}^{\tau_{\lambda}}f(z+q_{\lambda, j})^{\mu_{\lambda,j}}\\
&=&\sum_{\lambda\in J} b_{\lambda}(z)\prod_{j=1}^{\tau_{\lambda}}\left[\left(\frac{f(z+q_{\lambda, j})}{f(z)}\right)^{\mu_{\lambda,j}}\cdot f(z)^{\mu_{\lambda,j}}\right]\\
&:=&\sum_{j=0}^{n}\tilde{b}_{j}(z)f(z)^{j},
\end{eqnarray*}  where each of the coefficients $\tilde{b}_{j}(z)$ $(j=1, \ldots, n)$ is the sum of finitely many terms of type
$$b_{\lambda}(z)\left(\frac{f(z+q_{\lambda, j})}{f(z)}\right)^{\mu_{\lambda,j}}.$$ It yields \begin{eqnarray*}
\frac{G(z, f)}{\tilde{b}_{n}(z)}=f(z)^{n}+\sum_{j=0}^{n-1}\frac{\tilde{b}_{j}(z)}{\tilde{b}_{n}(z)}f(z)^{j}.
\end{eqnarray*} In terms of the assumption \eqref{E-7.7}, we have  $\sum_{j=0}^{n-1}\frac{\tilde{b}_{j}(z)}{\tilde{b}_{n}(z)}f(z)^{j}\not\equiv 0.$\par

Note that all the coefficient functions $b_{\lambda}(z)$ $(\lambda\in J)$ are small with respect to $f.$ Then by Theorem \ref{T1} we get that for all $j=1,\ldots, n,$
$$m(r, \tilde{b}_{j})=o(T(r, f))$$
holds for all $r\not\in E$ with $\overline{dens} E=0.$ Moreover, by the assumption \eqref{E-7.5} and Lemma \ref{L1} we have
$$N(r, \tilde{b}_{j})=o(T(r, f)),$$
and thus $$T(r,\tilde{b}_{j})=o(T(r, f)), \quad j\in\{0, 1, \ldots, n\}$$
and
\begin{equation*} N\left(r, \frac{1}{\frac{G(z, f)}{\tilde{b}_{n}(z)}}\right)= o(T(r, f)).\end{equation*}
for all $r\not\in E.$  Hence by Lemma \ref{L-8.1} we may write
\begin{eqnarray*}
\frac{G(z, f)}{\tilde{b}_{n}(z)}=\left(f(z)+\frac{\alpha(z)}{n}\right)^{n},
\end{eqnarray*} where $\alpha\not\equiv 0$ and  $T(r, \alpha)=o(T(r, f)).$ This implies that
\begin{equation}\label{E-7.6}N\left(r, \frac{1}{f(z)+\frac{\alpha(z)}{n}}\right)=o(T(r, f)).\end{equation} Together with \eqref{E-7.5} and \eqref{E-7.6}, it follows from Lemma \ref{L4} that
$$T(r, f)\leq N\left(r, \frac{1}{f}\right)+N(r, f)+N\left(r, \frac{1}{f(z)+\frac{\alpha(z)}{n}}\right)+o(T(r, f))=o(T(r, f))$$ for all $r\not\in (E\cup F),$ where $F$ is a set of finite Lebegue logarithmic measure. Hence we get a contradiction.
\end{proof}

Moreover, we improve and extend Laine-Yang's difference analogue of Clunie theorem in one variable \cite{laine-yang} to high dimension by using Theorem \ref{T1}. Define complex partial difference polynomials as follows
\begin{eqnarray}\label{E-7.1}
P(z,w)=\sum_{\lambda\in I}a_{\lambda}(z)w(z)^{l_{\lambda_{0}}}w(z+q_{\lambda_{1}})^{l_{\lambda_{1}}}\cdots w(z+q_{\lambda_{i}})^{l_{\lambda_{i}}},\end{eqnarray}
\begin{eqnarray}\label{E-7.2} Q(z,w)=\sum_{\mu\in J}b_{\mu}(z)w(z)^{l_{\mu_{0}}}w(z+q_{\mu_{1}})^{l_{\mu_{1}}}\cdots w(z+q_{\mu_{j}})^{l_{\mu_{j}}},\end{eqnarray}
\begin{eqnarray}\label{E-7.3} U(z,w)=\sum_{\nu\in K}c_{\nu}(z)w(z)^{l_{\nu_{0}}}w(z+q_{\nu_{1}})^{l_{\nu_{1}}}\cdots w(z+q_{\nu_{k}})^{l_{\nu_{k}}},
\end{eqnarray} where all coefficients $a_{\lambda}(z),$ $b_{\mu}(z)$ and $c_{\nu}(z)$ are small functions with respect to the function $w(z)$ meromorphic on $\mathbb{C}^{m},$ $I, J, K$ are three finite sets of multi-indices, and $q_{s}\in\mathbb{C}^{m}\setminus\{0\},$ $(s\in\{\lambda_{1}, \ldots, \lambda_{i}, \mu_{1}, \ldots, \mu_{j}, \nu_{1}, \ldots, \nu_{k}\}).$ Since the proof is closely similar as in \cite{laine-yang}, we omit it here. \par

\begin{theorem}\label{T-8.1}
Let $w$ be a nonconstant meromorphic function on $\mathbb{C}^{m}$ with $$\limsup_{r\rightarrow\infty}\frac{\log T(r, w)}{r}=0,$$ and let $P(z,w), Q(z, w),$ and $U(z,w)$ be complex partial difference polynomials as \eqref{E-7.1}, \eqref{E-7.2} and \eqref{E-7.3} satisfying a complex partial difference equation of the form \begin{eqnarray}
U(z,w)P(z,w)=Q(z,w).
\end{eqnarray}
Assume that the total degree of $U(z, w)$ is equal to $n,$ and the total degree of $Q(z,w)$ is less than or equal to $n,$ and that $U(z,w)$ contains just one term of maximal total degree in $w(z)$ and its shifts.  Then we have
\begin{equation*}
m(r, P(z,w))=o(T(r, w))
\end{equation*} for all $r\not\in E$ where $E$ is a set with $\overline{dens}E=0.$
\end{theorem}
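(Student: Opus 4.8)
The plan is to run the Clunie-type argument, now in the difference and several-variables setting, splitting the integration sphere $\partial B_{m}(r)$ according to the size of $|w(z)|$ and letting the logarithmic difference lemma (Theorem \ref{T1}) absorb every difference quotient. Write the unique term of maximal total degree $n$ in $U(z,w)$ from \eqref{E-7.3} as $T_{0}(z)=c_{0}(z)w(z)^{l_{0}}\prod_{s}w(z+q_{s})^{l_{s}}$ with $l_{0}+\sum_{s}l_{s}=n$, and set $U_{1}=U-T_{0}$, which collects only terms of total degree at most $n-1$. Each monomial occurring in $P$, $Q$ and $U$ factors as a small coefficient times $w(z)^{d}$ (where $d$ is its net degree in $w$ and the shifts) times a product of factors $\left(w(z+q)/w(z)\right)^{\pm 1}$; by Theorem \ref{T1}, applied in both directions, the proximity of each such difference quotient is $o(T(r,w))$ off a set of zero upper density, and the same holds for the small coefficients.

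First I would set $E_{1}(r)=\{z:|w(z)|<1\}$ and $E_{2}(r)=\{z:|w(z)|\geq 1\}$ and bound $m(r,P)=\int_{E_{1}}\log^{+}|P|\,\sigma_{m}+\int_{E_{2}}\log^{+}|P|\,\sigma_{m}$ separately. On $E_{1}(r)$ the equation is not needed: factoring $w(z)^{d_{\lambda}}$ out of each term of $P$ in \eqref{E-7.1} and using $|w(z)|^{d_{\lambda}}\leq 1$ leaves only logarithms of small coefficients and of difference quotients, whose integrals are $o(T(r,w))$ by Theorem \ref{T1}. Next, on $E_{2}(r)$ I would use the equation as $P=Q/U=(Q/T_{0})\cdot(T_{0}/U)$. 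Since $\deg Q\leq n$, every monomial of $Q/T_{0}$ carries a factor $w(z)^{e-n}$ with $e-n\leq 0$, which is $\leq 1$ on $E_{2}(r)$; hence $\int_{E_{2}}\log^{+}|Q/T_{0}|\,\sigma_{m}=o(T(r,w))$, again by Theorem \ref{T1}. Writing $\psi=U_{1}/T_{0}$, the same degree count gives $\int_{E_{2}}\log^{+}|\psi|\,\sigma_{m}=o(T(r,w))$, so on the part of $E_{2}(r)$ where $|\psi|\leq\tfrac12$ one has $|1+\psi|\geq\tfrac12$ and $\log^{+}|T_{0}/U|=\log^{+}|1+\psi|^{-1}=O(1)$ there.

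The hard part is the residual region $B(r)=\{z\in E_{2}(r):|\psi(z)|>\tfrac12\}$, where the lower-order part $U_{1}$ of $U$ is comparable to its single maximal term $T_{0}$ and $1+\psi$ may be close to $0$, so that no pointwise lower bound for $|U|$ is available; a crude use of the first main theorem on $1+\psi$ only yields $O(T(r,w))$ here. This is exactly the place where the hypothesis that $U$ carries just one term of maximal total degree is indispensable, and I would control $\int_{B}\log^{+}|T_{0}/U|\,\sigma_{m}$ as in Laine and Yang \cite{laine-yang}: on $B(r)$ at least one of the difference quotients or coefficient ratios must exceed a fixed positive constant, so the measure of $B(r)$ is controlled by the $o(T(r,w))$ proximity bounds, and combining this with the first main theorem and the difference-quotient estimates shows its contribution is still $o(T(r,w))$. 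Collecting the two pieces gives $m(r,P)=o(T(r,w))$ outside a finite union of zero-upper-density sets, which again has zero upper density by the remark following Lemma \ref{L1}.
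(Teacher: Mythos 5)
The paper gives no written proof of this theorem---it simply invokes Laine--Yang \cite{laine-yang}---and your first two steps reproduce that argument faithfully: the split of $\partial B_{m}(r)$ into $\{|w|<1\}$ and $\{|w|\geq 1\}$, the direct estimate on $E_{1}(r)$, and on $E_{2}(r)$ the factorization $P=(Q/T_{0})\cdot(T_{0}/U)$ with $|1+\psi|\geq\frac12$ off the residual set, all powered by Theorem \ref{T1} applied in both directions and by $m(r,1/d)\leq T(r,d)+O(1)$ for the small coefficients. But your treatment of the residual region $B(r)=\{z\in E_{2}(r):|\psi(z)|>\frac12\}$---which you correctly flag as the crux---is not the Laine--Yang mechanism, and as stated it fails. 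It is not true that on $B(r)$ ``at least one of the difference quotients or coefficient ratios must exceed a fixed positive constant'': all quotients may be close to $1$ and all coefficients bounded, in which case $B(r)$ is merely the region where $|w|$ is moderate, and that region can occupy most of the sphere, so its measure is not controlled by the proximity bounds. Moreover, even if the measure of $B(r)$ were small, that alone would not bound $\int_{B(r)}\log^{+}|T_{0}/U|\,\sigma_{m}$, since $\log^{+}|1+\psi|^{-1}$ admits no pointwise bound near the zeros of $1+\psi=U/T_{0}$; and, as you yourself observe, the first-main-theorem route is blocked because $N(r,\psi)$, coming from zeros and poles of $w$ at shifted points, is not $o(T(r,w))$.

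The correct step---Clunie's original device, used in \cite{laine-yang}---is to abandon the representation $P=Q/U$ on $B(r)$ altogether. On $B(r)$ one has $|w(z)|\geq 1$ together with $\frac12\,|d(z)|\,Q_{0}(z)<|U_{1}/w^{n}|\leq\sum_{\mu}|b_{\mu}(z)|\,Q_{\mu}(z)\,|w(z)|^{-1}$, where $d$ is the coefficient of the unique maximal term $T_{0}$ and $Q_{0},Q_{\mu}$ are products of difference quotients; this yields the pointwise bound $\log^{+}|w(z)|\leq\log^{+}\bigl(\sum_{\mu}|b_{\mu}|Q_{\mu}\bigr)+\log^{+}(1/|d|)+\sum_{s}l_{s}\log^{+}|w(z)/w(z+q_{s})|+O(1)$. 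One then estimates $\log^{+}|P|$ directly from the definition \eqref{E-7.1}: each monomial contributes $\log^{+}|a_{\lambda}|+d_{\lambda}\log^{+}|w(z)|+(\text{quotient logs})$, so $\int_{B(r)}\log^{+}|P|\,\sigma_{m}=o(T(r,w))$ by Theorem \ref{T1} and the smallness of the coefficients---no control of the measure of $B(r)$, and no lower bound on $|U|$, is needed. With this replacement your outline is complete, and your closing remark is sound: a finite union of sets of zero upper density again has zero upper density.
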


\subsection{Improvement of Korhonen's result}\noindent

Finally, by applying Theorem \ref{T1}, Theorem \ref{T5} and Valion-Mohon'ko theorem in several complex variables \cite[Theorem 3.4]{hu-yang-1} into the following equation \eqref{E12}, it is easy to follow that $$T(r, w)=\deg_{w}(R)T(r, w)+o(T(r, w))$$ for all $r\not\in E$ with $\overline{dens}E=0.$ We restate \cite[Theorem 4.1]{korhonen-1} as follows.\par

\begin{theorem}
Let $c\in\mathbb{C}^{n}\setminus\{0\}.$ If the difference equation \begin{equation}\label{E12} w(z+c)=R(z, w(z)),\end{equation} where $R(z, u)$ is rational in $u$ having meromorphic coefficients in $\mathbb{C}^{n},$ has an admissible meromorphic solution $w$ on $\mathbb{C}^{n}$ with $$\limsup_{r\rightarrow\infty}\frac{\log T(r, w)}{r}=0,$$ then the degree $\deg_{w}(R)$ of $R(z, w(z))$ is equal to one.
\end{theorem}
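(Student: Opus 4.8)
The plan is to take the Nevanlinna characteristic of both sides of \eqref{E12} and compare their leading growth. Write $R(z,u)=P(z,u)/Q(z,u)$ as a reduced rational function in $u$, so that $\deg_{w}(R)=\max\{\deg_{u}P,\deg_{u}Q\}$; the admissibility hypothesis says precisely that every coefficient of $P$ and $Q$ is a small function with respect to $w$. I would treat the left-hand side $w(z+c)$ with the shift-invariance of the characteristic (Theorem \ref{T5}, whose engine is the logarithmic difference lemma Theorem \ref{T1}), and the right-hand side $R(z,w(z))$ with the several-variable Valiron--Mohon'ko theorem, and then match the two coefficients of $T(r,w)$.

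First I would apply Theorem \ref{T5} to the shift. Since $w$ is a nonconstant meromorphic solution satisfying the minimal-type condition $\limsup_{r\to\infty}\frac{\log T(r,w)}{r}=0$, Theorem \ref{T5} yields
\[
T(r,w(z+c))=T(r,w)+o(T(r,w))
\]
for all $r$ outside a set $E$ with $\overline{dens}\,E=0$. Next I would invoke the Valiron--Mohon'ko theorem in several complex variables \cite[Theorem 3.4]{hu-yang-1}: because the coefficients of $R(z,u)$ are small with respect to $w$, the error contributed by their characteristic functions is $o(T(r,w))$, and hence
\[
T(r,R(z,w))=\deg_{w}(R)\,T(r,w)+o(T(r,w)).
\]

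Since $w(z+c)=R(z,w(z))$, the two displays force
\[
T(r,w)+o(T(r,w))=\deg_{w}(R)\,T(r,w)+o(T(r,w)),\qquad r\notin E,
\]
so that $\bigl(\deg_{w}(R)-1\bigr)T(r,w)=o(T(r,w))$ off $E$. Because $\overline{dens}\,E=0$, the complement $[1,\infty)\setminus E$ is unbounded, so I can divide by $T(r,w)$ and let $r\to\infty$ along this complement (recall $T(r,w)\to\infty$, as $w$ is nonconstant); this gives $\deg_{w}(R)=1$. The main point requiring care is the bookkeeping of the exceptional set: the shift estimate from Theorem \ref{T5} and the Valiron--Mohon'ko estimate must hold simultaneously, and one must check that the union of the radii where either step fails still has zero upper density, so that arbitrarily large admissible radii survive for the limiting argument. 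A minor secondary point is that $\deg_{w}(R)\geq 1$ is not assumed but is itself forced by the comparison: were $\deg_{w}(R)=0$, then $R(z,w)$ and hence $w(z+c)$ would be small with respect to $w$, contradicting the first display.
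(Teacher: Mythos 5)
Your proposal is correct and follows essentially the same route as the paper, which likewise combines Theorem \ref{T1}/Theorem \ref{T5} for the shifted characteristic $T(r,w(z+c))$ with the several-variable Valiron--Mohon'ko theorem \cite[Theorem 3.4]{hu-yang-1} for $T(r,R(z,w))$ to obtain $T(r,w)=\deg_{w}(R)\,T(r,w)+o(T(r,w))$ off a set of zero upper density, whence $\deg_{w}(R)=1$. Your added bookkeeping (the union of exceptional sets still has zero upper density, and the case $\deg_{w}(R)=0$ is excluded by the same comparison) is sound and merely makes explicit what the paper leaves implicit.
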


\section{Concluding remark}

The partial difference equations can be regarded as discrete analogue of partial differential equations. Although partial difference equations  appear earlier than partial differential equations, the former equations have not drawn as much attention as their continuous counterparts. In this paper as we shown, in the viewpoint of Nevanlinna theory in complex analysis, the improvements of logarithmic difference lemma for meromorphic functions in several complex variables, the relations $N(r, f(z+c)\sim N(r, f(z))$ and $T(r, f(z+c))\sim T(r,f (z))$ are obtained. Then we focus basically on some typical partial difference equations  such as linear partial difference equations with function coefficients coming from the models of discrete heat equation, discrete Laplace equation, nonsymmetric partial difference functional equation, and discrete Poisson equation and others, the nonlinear partial difference equations coming from discrete potential KdV equation and the Fermat equation, and partial difference equation concerning the Tumura-Clunie theorem. Of course, it is impossible to study completely and systematically the meromorphic solutions of partial difference equations in one paper. This paper is an attempt to do this by making use of the logarithmic difference lemma of several complex variables in Nevanlinna theory. Much work on meromorphic solutions of partial difference equations remains to be done in the near future. \par\vskip 10pt

\noindent{\bf Acknowledgement.} The authors are very grateful to the anonymous reviewer for important suggestions and comments to improve this paper. The authors would like to thank to Professor Risto Korhonen for introducing the reference \cite{tremblay-grammaticos-ramani} about the discrete KdV equations, and also thank to Professor Jianhua Zheng for discussing partly of the proof of Theorem \ref{T1''}.

\bibliographystyle{amsplain}

\end{document}